\theoremstyle{plain}
\newtheorem{Thm}{Theorem}
\newtheorem{Lma}{Lemma}
\newtheorem{Prop}{Proposition}
\newtheorem{Cor}{Corollary}
\theoremstyle{definition}
\newtheorem{Def}{Definition}
\newtheorem*{Not}{Notation}
\newtheorem{Ex}{Example}
\newcommand{\tth}{{\text{th}}}
\newcommand{\nth}{n^\tth}
\newcommand{\rth}{r^\tth}
\newcommand{\kth}{k^\tth}
\newcommand{\propref}[1]{Proposition~{\bf\ref{#1}}}
\newcommand{\threepropref}[3]{Propositions~{\bf\ref{#1}},~{\bf\ref{#2}}~and~{\bf\ref{#3}}}
\newcommand{\lemref}[1]{Lemma~{\bf\ref{#1}}}
\newcommand{\figref}[1]{Figure~{\bf\ref{#1}}}
\newcommand{\Eqref}[1]{Equation~{\bf\ref{#1}}}
\newcommand{\secref}[1]{Section~{\bf\ref{#1}}}
\newcommand{\twoeqref}[2]{Equations~{\bf\ref{#1}}~and~{\bf\ref{#2}}}
\newcommand{\conseceqref}[2]{Equations~{\bf\ref{#1}}~-~{\bf\ref{#2}}}
\newcommand{\exref}[1]{Example~{\bf\ref{#1}}}
\newcommand{\bbZ}{\mathbb{Z}}
\newcommand{\bbN}{\mathbb{N}}
\newcommand{\dsum}{\displaystyle\sum}
\newcommand{\grt}{T(c,d,d_1,d_2)}
\begin{document}
\title{Generalized Rascal Triangles}
\date{\today}
\author{Philip~K. Hotchkiss}
\address{Mathematics Department\\
Westfield State University\\
Westfield, MA 01086}
\email[Philip~K. Hotchkiss]{photchkiss@westfield.ma.edu}
\keywords{Rascal Triangle, number triangles, arithmetic sequences}
\subjclass[2010]{11B25,11B99}
\maketitle

\begin{abstract}
	The Rascal Triangle was introduced by three middle school students in 2010, and in this paper we describe number triangles that are generalizations of the Rascal Triangle and show that these Generalized Rascal Triangles are characterized by arithmetic sequences on all diagonals as well as Rascal-like multiplication and addition rules. 
\end{abstract}

\section{Introduction}\label{Sect:Intro}

In 2010, middle school students Alif Anggaro, Eddy Liu and Angus Tulloch \cite{ALT} were asked to determine the next row of numbers in the following triangular array:
	\begin{figure}[H]
		\centering
		\includegraphics[scale=0.5]{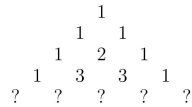}
		\caption{A triangular array.}\label{Fi:RasTriStart}
	\end{figure}
Instead of providing the expected answer
	\begin{center}
		1\qquad 4\qquad 6\qquad 4\qquad 1
	\end{center}
 from Pascal's Triangle; they produced the row
	\begin{center}
		1\qquad 4\qquad 5\qquad 4\qquad 1.
	\end{center}
They did this by using the rule that the outside numbers are 1s and the inside numbers are determined by the {\bf diamond formula}
	\begin{equation*}\label{Eq:ALT}
		\mathbf{ South} = \dfrac{\mathbf{East}\cdot \mathbf{West} + 1}{\mathbf{North}}
	\end{equation*}
where {\bf North}, {\bf South}, {\bf East} and {\bf West} form a diamond in the triangular array as in \figref{Fi:nsew}.
	\begin{figure}[H]
		\centering
		\includegraphics[scale=0.5]{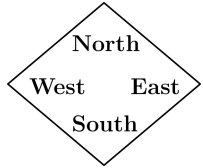}
		\caption{{\bf North}, {\bf South}, {\bf East} and {\bf West} entries in a triangular array.}\label{Fi:nsew}
	\end{figure}
	
Continuing with this rule Anggaro, Liu and Tulloch created a number triangle they called the {\bf Rascal Triangle}.
	\begin{figure}[H]
		\centering
		\includegraphics[scale=0.5]{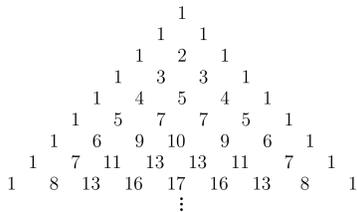}
		\caption{The Rascal Triangle.}\label{Fi:RasTri}
	\end{figure}

Because the diamond formula involves division, their instructor challenged Anggaro, Liu and Tulloch to prove that it would always result in an integer.  They did this by observing that the diagonals in the Rascal Triangle formed arithmetic sequences.  In particular, the $\kth$ entry on the $\rth$ diagonal running from right to left is given by $1+rk$, where $r=0$ corresponds to the outside diagonal consisting of all 1s, and $k=0$ corresponds to the first entry on each diagonal.  Thus, if {\bf North} = $1+(r-1)(k-1)$, then {\bf East} = $1+r(k-1)$, {\bf West} = $1+(r-1)k$ and {\bf South} = $1+rk$, 
\begin{figure}[H]
	\centering
	\includegraphics[scale=0.5]{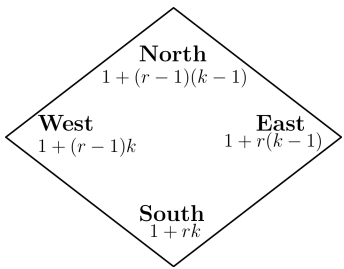}
	\caption{Algebraic Representation of \textbf{North}, \textbf{East}, \textbf{West} and \textbf{South}.}\label{Fi:NSEWReps}  
\end{figure}
and a straight forward calculation verifies that
	\begin{equation*}
		\mathbf{South} = \dfrac{\mathbf{East}\cdot \mathbf{West} + 1}{\mathbf{North}}.
	\end{equation*}

In the Spring 2015 semester, a Mathematics for Liberal Arts class taught by my colleague, Julian Fleron, discovered that the Rascal Triangle can also be generated by the rule that the outside numbers are 1s and the inside numbers are determined by the formula
	\begin{equation*}\label{Eq:JFclass}
		\mathbf{South} = \mathbf{East}+ \mathbf{West} -\mathbf{North} + 1.
	\end{equation*}
	
This formula also follows from the arithmetic progressions along the diagonals \cite{JF}.  Thus, the Rascal Triangle has the property that for any diamond containing 4 entries, the {\bf South} entry satisfies two equations: 
	\begin{align}
		\mathbf{South} &= \dfrac{\mathbf{East}\cdot \mathbf{West} + 1}{\mathbf{North}}\label{eq:RasForm1}\\
		\mathbf{South} &= \mathbf{East}+ \mathbf{West} -\mathbf{North} + 1\label{eq:RasForm2}
	\end{align}
	
The fact that both \twoeqref{eq:RasForm1}{eq:RasForm2} can be used to generate the Rascal Triangle was intriguing to me and I assumed that the Rascal Triangle was uniquely defined by either one of the two equations; so I began trying to prove that \Eqref{eq:RasForm2} implied \Eqref{eq:RasForm1} or vice versa.  In addition, I followed Julian Fleron's lead and had some of my mathematics for liberal arts classes look for patterns in the Rascal Triangle, and to my delight they also made some original discoveries \cite{H}.  During the following summer, while exploring the patterns found by my students, I realized that there were other number triangles for which one equation held for the interior entries but the other did not, as will be shown below.   This led me to the realization that there was a larger class of number triangles, of which the Rascal Triangle was but one example.

\section{Motivating Examples and Generalized Rascal Triangles}\label{S:MExGRTs}

In what follows we will use the following conventions.

\begin{Def}
	For any number triangle, the diagonals running from right to left will be called the {\bf major} diagonals while the diagonals running from left to right will be called the {\bf minor} diagonals.
\end{Def}

	\begin{figure}[H]
		\centering
		\begin{subfigure}[b]{0.2\textwidth}
			\includegraphics[width=0.675\textwidth]{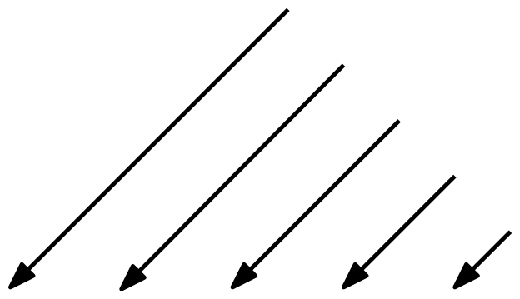}
			\subcaption{major diagonals.}\label{Fi:majord}
		\end{subfigure}
		\hskip0.5in
		\begin{subfigure}[b]{0.2\textwidth}
			\includegraphics[width=0.675\textwidth]{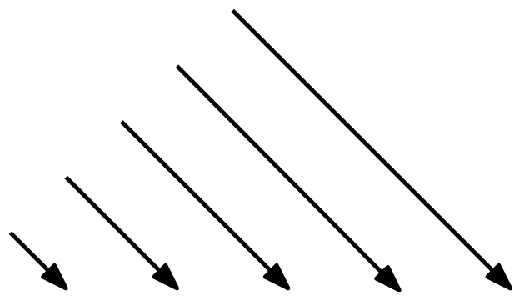}
			\subcaption{Minor diagonals.}\label{Fi:minord}
		\end{subfigure}
	\end{figure}

\begin{Ex}\label{ex1} 
	Let $T$ be the number triangle in \figref{Fi:GenRasTri3};
		\begin{figure}[H]
			\centering
			\includegraphics[scale=0.5]{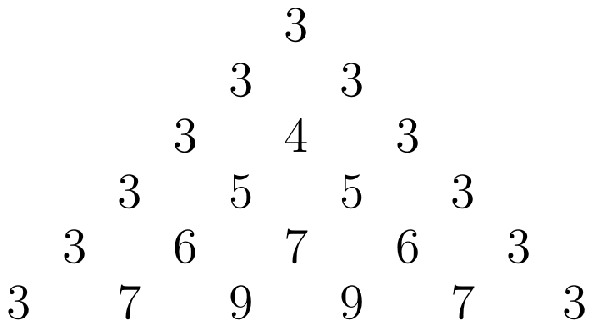}
			\caption{The first six rows of $T$.}\label{Fi:GenRasTri3}
		\end{figure}
	\noindent the interior numbers satisfy \Eqref{eq:RasForm2} but not \Eqref{eq:RasForm1}.  However, the interior numbers do satisfy a modified version of \Eqref{eq:RasForm1}:
	\begin{equation*}
		\mathbf{South} = \dfrac{\mathbf{West}\cdot \mathbf{East} + 3}{\mathbf{North}}
	\end{equation*}
\end{Ex}

\begin{Ex}\label{ex2} 
	Let $S$ be the number triangle in \figref{Fi:GenRasTri5};
		\begin{figure}[H]
			\centering
			\includegraphics[scale=0.5]{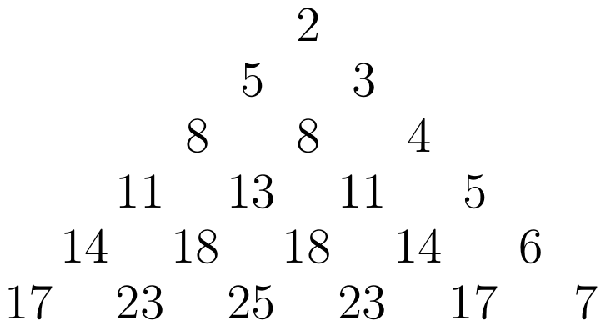}
			\caption{The first six rows of $S$.}\label{Fi:GenRasTri5}
		\end{figure}
	\noindent the interior numbers satisfy \Eqref{eq:RasForm1} but not \Eqref{eq:RasForm2}.  However, the interior numbers do satisfy a modified version of \Eqref{eq:RasForm2}:
	\begin{equation*}
		\mathbf{South} = \mathbf{West} + \mathbf{East} + 2 - \mathbf{North}
	\end{equation*}
\end{Ex}

\begin{Ex}\label{ex3} 
	Let $U$ be the number triangle in \figref{Fi:NumTri1}
		\begin{figure}[H]
			\centering
			\includegraphics[scale=0.5]{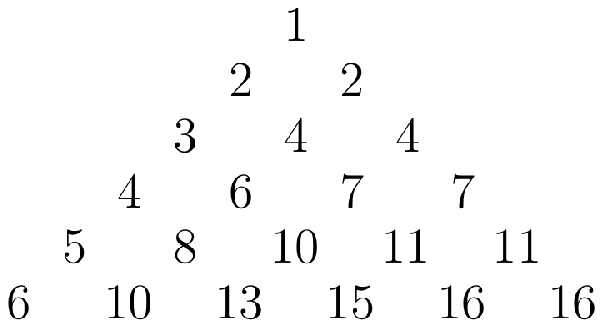}
			\caption{The first six rows of $U$.}\label{Fi:NumTri1}
		\end{figure}
	\noindent the interior numbers satisfy \Eqref{eq:RasForm2}, but there is no modification of \Eqref{eq:RasForm1} that works for all interior numbers.  To see this, consider the diamonds
	\begin{figure}[H]
		\centering
		\includegraphics[scale=0.5]{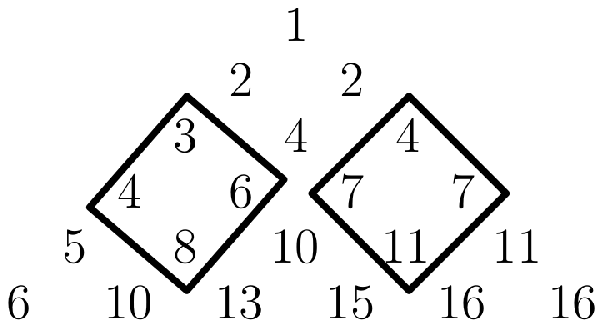}
		\caption{Different diamonds.}\label{Fi:NumTri1Diamonds}
	\end{figure}
	For the diamond on the left, the modification of \Eqref{eq:RasForm1} would need to be
		\begin{align*}
			\mathbf{South} &=\dfrac{\mathbf{West}\cdot \mathbf{East} + 0}{\mathbf{North}}\\
			\intertext{while for the diamond on the right, the modification of \Eqref{eq:RasForm1} would need to be}
			\mathbf{South} &=\dfrac{\mathbf{West}\cdot \mathbf{East} -5}{\mathbf{North}}.
		\end{align*}
\end{Ex}	

\begin{Ex}\label{ex4} 
	Let $V$ be the number triangle whose first six rows are shown in \figref{Fi:NumTri2}.
		\begin{figure}[H]
			\centering
			\includegraphics[scale=0.3]{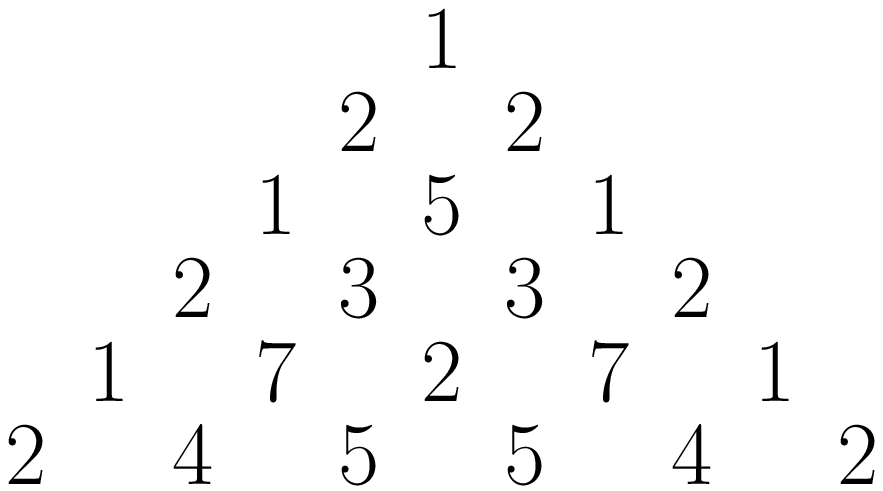}
			\caption{The first six rows of $V$.}\label{Fi:NumTri2}
		\end{figure}
	\noindent The interior numbers satisfy \Eqref{eq:RasForm1}, but there is no modification of \Eqref{eq:RasForm2} that works for all interior numbers.  To see this, consider the diamonds in \figref{Fi:NumTri2Diamonds}.
	\begin{figure}[H]
		\centering
		\includegraphics[scale=0.3]{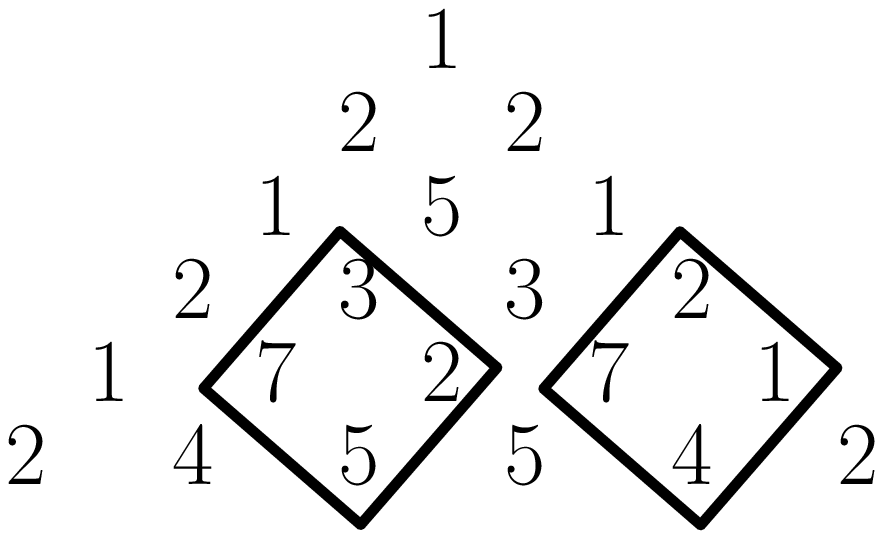}
		\caption{Different diamonds.}\label{Fi:NumTri2Diamonds}
	\end{figure}
	For the diamond on the left, the modification of \Eqref{eq:RasForm2} would need to be
		\begin{align*}
			\mathbf{South} &=\mathbf{West}+ \mathbf{East} -1 - \mathbf{North}\\
			\intertext{while for the diamond on the right, the modification of \Eqref{eq:RasForm2} would need to be}
			\mathbf{South} &=\mathbf{West}+ \mathbf{East} -2 - \mathbf{North}.
		\end{align*}
\end{Ex}	

\begin{Ex}\label{ex5} 
	Let $W$ be the number triangle whose first six rows are shown in \figref{Fi:GenRasTri1}.
		\begin{figure}[H]
			\centering
			\includegraphics[scale=0.5]{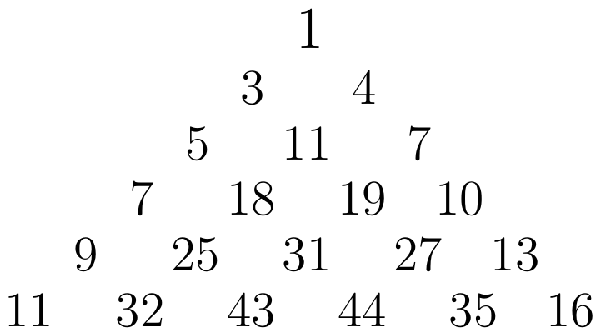}
			\caption{The first six rows of $W$.}\label{Fi:GenRasTri1}
		\end{figure}
	\noindent the interior numbers satisfy neither \twoeqref{eq:RasForm1}{eq:RasForm2}, but there are modifications of both equations that work for all interior numbers,
		\begin{align*}
			\mathbf{South} &= \mathbf{West} + \mathbf{East} + 5 - \mathbf{North}
			\intertext{and}
			\mathbf{South} &=\dfrac{\mathbf{West}\cdot \mathbf{East} -1}{\mathbf{North}}.
		\end{align*}
\end{Ex}

Note that for $T$, $S$ and $W$ all the major and minor diagonals are arithmetic sequences and the interior numbers satisfy equations similar to \twoeqref{eq:RasForm1}{eq:RasForm2}.  Whereas in $U$, some of the diagonals are not arithmetic sequences and although the interior entries in $U$ satisfy \Eqref{eq:RasForm2}, there is no modification of \Eqref{eq:RasForm1} that will work for all of the interior entries.  While in $V$, none of the diagonals are arithmetic sequences and although the interior entries in $V$ satisfy \Eqref{eq:RasForm1} there is no modification of \Eqref{eq:RasForm2} that will work for all of the interior entries.    This suggests that for number triangles with arithmetic sequences on both the major and minor diagonals, the interior numbers satisfy two equations of the form \twoeqref{eq:RasForm1}{eq:RasForm2}.  

This motivates the definitions below. 

\begin{Not}
For a number triangle, $T$, we will use $T_{r,k}$ to denote the $\kth$ entry on the $\rth$ major diagonal with $r=0$ corresponding to the outside major diagonal and $k=0$ corresponding to the first entry on each major diagonal.  With this notation, $T_{0,0}$ corresponds to the top number of $T$.  Note that on the minor diagonals, $T_{r,k}$ denotes the $\rth$ entry on the $\kth$ minor diagonal with $k=0$ corresponding to the outside minor diagonal on the right and $r=0$ corresponding to the first entry on each minor diagonal.   
\end{Not}

\begin{Def}
Let $c,d,d_1,d_2 \in \bbZ$. A number triangle, $T$ is called a \emph{Generalized Rascal Triangle} if  
	\begin{equation}
		T_{r,k} = c + kd_1 + rd_2 + rkd\label{eq:GenRasEq}
	\end{equation} 
for all $r,k\ge 0$.  We will write $\grt$ for the Generalized Rascal Triangle determined by the constants $c,d,d_1,d_2$.
\end{Def}

In a Generalized Rascal Triangle, $\grt$, the constant $c$ is the top entry, $d_1$ is the arithmetic difference along the outside major diagonal, $d_2$ is the arithmetic difference along the outside minor diagonal and $d$ is the change in the arithmetic differences as we move from one major diagonal to the next or move from one minor diagonal to the next.  In particular, the $\rth$ major diagonal is the arithmetic sequence
	\begin{align}
		M_r(k) &= (c+rd_2) + k(d_1+rd);\label{eq:majorarthseq}\\
		\intertext{and the $\kth$ minor diagonal is the arithmetic sequence}
		m_k(r) &= (c+kd_1) + r(d_2+kd).\label{eq:minorarthseq}
	\end{align}

\begin{Def}
	Let $d, D \in \bbZ$ and let $T$ be a number triangle.  If the interior numbers satisfy the equation
	\begin{align}
		T_{r,k} &= \dfrac{T_{r-1,k}\cdot T_{r,k-1} + D}{T_{r-1,k-1}}\label{eq:RascalMult}\\
		\intertext{we call this a \emph{Rascal-like multiplication rule} with multiplicative constant, $D$; and if the interior numbers satisfy}
		T_{r,k} &= T_{r-1,k} + T_{r,k-1} + d - T_{r-1,k-1}\label{eq:RascalAdd}
	\end{align}
	that will be called a \emph{Rascal-like addition rule} with additive constant $d$.
\end{Def}

\begin{Ex}
	The Generalized Rascal Triangle $T(1,1,0,0)$, which is defined by the equation
		$$T_{r,k} = 1 + 0k + 0r + rk = 1+rk$$
	has a constant sequence of 1s on the outside diagonals, and the arithmetic differences increase by 1 as we move from one major (resp., minor) diagonal to the next. This is, of course, the Rascal Triangle.
	\begin{figure}[H]
		\centering
		\includegraphics[scale=0.5]{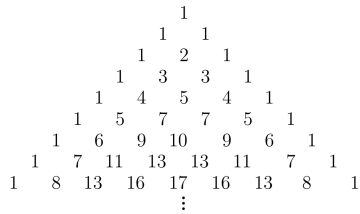}
		\caption{The Rascal Triangle.}\label{Fi:RasTri2}
	\end{figure}
	We will denote this triangle by $R$, and it's equation by $R_{r,k}=1+rk$.  As was mentioned earlier, the interior numbers satisfy both the addition rule
		\begin{align*}
			R_{r,k} &= R_{r-1,k} + R_{r, k-1} + 1 - R_{r-1,k-1}.\\
			\intertext{and the multiplication rule}
			R_{r,k} &= \dfrac{R_{r-1,k}\cdot R_{r,k-1} +1}{R_{r-1,k-1}}.
		\end{align*}
\end{Ex}

\begin{Ex}
	The Generalized Rascal Triangle, $T(2,2,3,1)$, is the number triangle determined by the equation 
		$$T_{r,k} = 2 + 2k + 3r + rk.$$ 
	This number triangle has a top entry of 2, the outside major diagonal has an arithmetic difference of 3, and the outside minor diagonal has an arithmetic difference of 1. Moreover, the arithmetic differences change by 2 as we move from one major (resp., minor) diagonal to the next. This results in the number triangle, $S$, from \exref{ex2}.  
		\begin{figure}[H]
			\centering
			\includegraphics[scale=0.5]{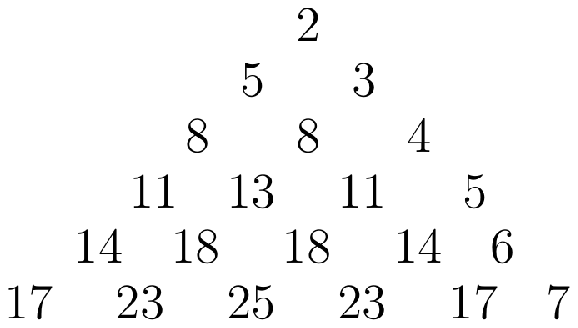}
			\caption{The first six rows of $S=T(2,1,3,2)$.}\label{Fi:GenRasTri6}
		\end{figure}
	
	The Rascal-like addition rule for $S$ is 
		\begin{align*}
			T_{r,k} &= T_{r-1,k} + T_{r, k-1} + 2 - T_{r-1,k-1}.\\
			\intertext{and the Rascal-like multiplication rule for $S$ is}
			T_{r,k} &= \dfrac{T_{r-1,k}\cdot T_{r,k-1} +1}{T_{r-1,k-1}}
		\end{align*}
\end{Ex}

\section{Addition and Multiplication Rules for Generalized Rascal Triangles}\label{S:GRTProps}

It is easy to show that every Generalized Rascal Triangle has a Rascal-like addition rule, \Eqref{eq:RascalAdd}, and a Rascal-like multiplication rule, \Eqref{eq:RascalMult} (see \propref{prop:GRTAdd} below). Our initial (naive) assumption was that if a number triangle $T$ had a Rascal-like addition or Rascal-like multiplication rule, it was a Generalized Rascal Triangle. However, as illustrated above, the existence of just a Rascal-like addition rule or just a Rascal-like multiplication rule for the interior numbers is not sufficient for a number triangle to be a Generalized Rascal Triangle.    Nevertheless, as we will show below, if the outside major and minor diagonals are arithmetic sequences, the existence of either a Rascal-like addition rule or a Rascal-like multiplication rule for the interior numbers is both necessary and sufficient for a number triangle to be a Generalized Rascal Triangle.

We begin with two lemmas whose proofs are left to the reader.

\begin{Lma}\label{L:constantr+k}
	Let $T$ be a number triangle. If $T_{r,k}$ is on the $\nth$ row,  then $r+k=n$. 
\end{Lma}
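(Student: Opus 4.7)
The plan is to verify the row-identification directly from the indexing convention for major diagonals, via a short double induction.

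First, I would handle the case $k = 0$. By definition, $T_{r,0}$ is the topmost entry on the $\rth$ major diagonal, and I would show by an auxiliary induction on $r$ that this entry lies on row $r$. The base case $r = 0$ is immediate: $T_{0,0}$ is the apex of the triangle, which is row $0$. For the inductive step, passing from the $\rth$ to the $(r+1)^\tth$ major diagonal moves us one diagonal to the left; because major diagonals run from upper-right to lower-left, the first entry of this new diagonal necessarily sits one row below the first entry of the previous diagonal, i.e., on row $r+1$. This establishes that $T_{r,0}$ is on row $r$, agreeing with $n = r + 0$.

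Next, I would fix $r$ and induct on $k$. Assuming $T_{r,k}$ is on row $r+k$, observe that $T_{r,k+1}$ is the next entry along the same major diagonal. Since a major diagonal descends exactly one row at each step (again because it runs from upper-right to lower-left), $T_{r,k+1}$ must lie on row $(r+k)+1 = r + (k+1)$. Combined with the base case, this shows that $T_{r,k}$ is on row $r+k$ for all $r,k \ge 0$, which is exactly the claim.

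The only real obstacle is articulating the geometric observations about how the major diagonals align with the rows; no deeper difficulty appears, since the statement is essentially a consequence of the indexing convention introduced just before the lemma. This is presumably why the authors leave the proof to the reader.
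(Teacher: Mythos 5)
The paper gives no proof of this lemma --- it is one of the ``two lemmas whose proofs are left to the reader'' --- so your double induction (first that $T_{r,0}$ lies on row $r$, then stepping along a major diagonal, which descends one row per entry) is a correct and complete fleshing-out of exactly the argument the indexing convention suggests. One small orientation slip worth fixing: under the paper's convention, passing from the $\rth$ to the $(r+1)^\tth$ major diagonal moves one diagonal to the \emph{right}, not the left (compare \figref{Fi:NSEWReps}, where \textbf{West} $=T_{r-1,k}$ has the smaller $r$), so the first entries $T_{r,0}$ descend the right-hand edge; this does not affect your conclusion, since either way the first entry of each successive major diagonal sits exactly one row lower.
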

	\begin{figure}[H]
		\centering
		\includegraphics[scale=0.5]{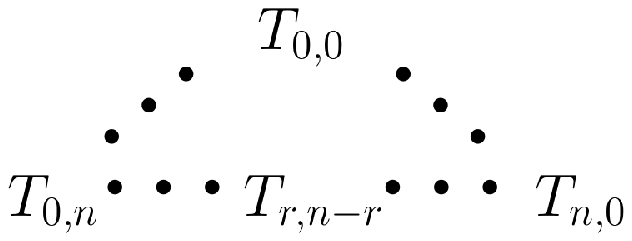}
		\caption{Row $n$.}\label{Fi:rown}
	\end{figure}

\begin{Lma}\label{(linE)(linW)=(linS)(linN)}
	Let $c, d_1, d_2, r, k, d \in \bbZ$, then 
		\begin{align*}
			\Big(c &+ (k-1)d_1+ rd_2  + r(k-1)d\Big)\Big(c  + kd_1+ (r-1)d_2 + (r-1)kd\Big) +cd-d_1d_2\\
			&= \Big(c+kd_1+rd_2+rkd\Big)\Big(c + (k-1)d_1 + (r-1)d_2 + (r-1)(k-1)d\Big).
		\end{align*}
\end{Lma}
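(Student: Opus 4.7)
The identity has the shape $(\text{East})(\text{West}) + (cd - d_1 d_2) = (\text{South})(\text{North})$, where the four factors are the formula $c+kd_1+rd_2+rkd$ evaluated at the four neighboring pairs of indices $(r,k-1)$, $(r-1,k)$, $(r,k)$, and $(r-1,k-1)$. My plan is to collapse the notation, expand each side as a polynomial in the ``South'' term, and observe that the awkward constant $cd - d_1 d_2$ appears exactly as required.

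\medskip

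First I would set $S := c + kd_1 + rd_2 + rkd$ (the South entry), and introduce the shorthands $\alpha := d_1 + rd$ and $\beta := d_2 + kd$. A direct rewrite then shows
\[
\text{East} = S - \alpha,\qquad \text{West} = S - \beta, \qquad \text{North} = S - \alpha - \beta + d.
\]
These identifications are just rearrangement of terms and should be verifiable at a glance.

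\medskip

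Next I would expand both products in terms of $S$, $\alpha$, $\beta$, $d$:
\[
\text{East}\cdot\text{West} = S^2 - (\alpha+\beta)S + \alpha\beta,\qquad \text{South}\cdot\text{North} = S^2 - (\alpha+\beta)S + Sd.
\]
The $S^2$ and $(\alpha+\beta)S$ terms cancel when we form the difference, leaving
\[
\text{South}\cdot\text{North} - \text{East}\cdot\text{West} = Sd - \alpha\beta.
\]
It remains to verify that $Sd - \alpha\beta = cd - d_1 d_2$. Expanding $\alpha\beta = d_1 d_2 + k d_1 d + r d_2 d + rk d^2$ and $Sd = cd + k d_1 d + r d_2 d + rk d^2$, the middle terms cancel and one is left with $cd - d_1 d_2$, which is exactly what is wanted.

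\medskip

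The entire argument is elementary polynomial algebra; the only ``obstacle'' is bookkeeping, and the substitution $\alpha = d_1 + rd$, $\beta = d_2 + kd$ is the key move that keeps the expansion compact and exposes the cancellation. No assumption on the sign or nonvanishing of $d$, $d_1$, $d_2$ is required, since the identity is a formal polynomial equality in $c, d, d_1, d_2, r, k$.
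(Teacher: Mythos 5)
Your proof is correct, and it takes a genuinely different route from the paper's. The paper's own argument (omitted from the text as ``left to the reader'' but present in the source) is a brute-force expansion of the left-hand side into all sixteen monomials, followed by three or four rounds of regrouping until the factored right-hand side emerges. You instead make the structural substitution $S = c+kd_1+rd_2+rkd$, $\alpha = d_1+rd$, $\beta = d_2+kd$, under which the four factors become $S-\alpha$, $S-\beta$, $S$, and $S-\alpha-\beta+d$ (each identification is a one-line check, and your computations of $S-\alpha$, $S-\beta$, and $S-\alpha-\beta+d$ do match the three stated factors); both products then share the terms $S^2-(\alpha+\beta)S$, so the identity collapses to $Sd-\alpha\beta = cd-d_1d_2$, which is immediate since the cross terms $kd_1d$, $rd_2d$, $rkd^2$ cancel. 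What your approach buys is not just brevity but an explanation: it shows that $\mathbf{South}\cdot\mathbf{North}-\mathbf{East}\cdot\mathbf{West}$ is independent of $r$ and $k$ because $\alpha$ and $\beta$ absorb all of the index dependence, and it identifies the invariant $cd-d_1d_2$ as the determinant of the coefficient array $\bigl(\begin{smallmatrix} c & d_1\\ d_2 & d\end{smallmatrix}\bigr)$ of the bilinear function $T_{r,k}$, whereas the paper's expansion verifies the equality without revealing why that particular constant appears. Your closing remark is also right: the statement is a formal polynomial identity in $c,d,d_1,d_2,r,k$, so no nonvanishing hypotheses are needed at this stage (they enter only later, in \propref{mult=grt}, where one divides by $T_{r-1,k-1}$).
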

\medskip

\subsection{Generalized Rascal Triangles satisfy Rascal-like addition and Rascal-like multiplication rules}\label{Ss:GRTAddMult} We first show that every Generalized Rascal Triangle satisfies Rascal-like addition and Rascal-like multiplication rules.
 
\begin{Prop}\label{prop:GRTAdd}
	Let $c, d, d_1, d_2 \in \bbZ$ and $T=\grt$ be the associated Generalized Rascal Triangle; then for $r,k\ge 1$
		\begin{align*}
			T_{r,k} &= T_{r,k-1} + T_{r-1,k} + d - T_{r-1,k-1}\\
			\intertext{and whenever $T_{r-1,k-1}\ne 0$}
			T_{r,k} &= \dfrac{T_{r,k-1}\cdot T_{r-1,k} + D}{T_{r-1,k-1}}
		\end{align*}
	where $D=cd-d_1d_2$.
\end{Prop}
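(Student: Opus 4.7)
The plan is to verify both identities by direct substitution of the closed form $T_{r,k} = c + kd_1 + rd_2 + rkd$ into the right-hand sides and simplifying. Since \lmref{(linE)(linW)=(linS)(linN)} already packages the awkward algebra, essentially all the work is routine.

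First, I would write down the four relevant entries of the diamond explicitly:
\begin{align*}
T_{r,k}     &= c + kd_1 + rd_2 + rkd,\\
T_{r,k-1}   &= c + (k-1)d_1 + rd_2 + r(k-1)d,\\
T_{r-1,k}   &= c + kd_1 + (r-1)d_2 + (r-1)kd,\\
T_{r-1,k-1} &= c + (k-1)d_1 + (r-1)d_2 + (r-1)(k-1)d.
\end{align*}
For the Rascal-like addition rule, I would compute $T_{r,k-1} + T_{r-1,k} - T_{r-1,k-1}$. The constant terms collapse to $c$, the $d_1$-coefficients to $k$, the $d_2$-coefficients to $r$, and the $d$-coefficient to $r(k-1) + (r-1)k - (r-1)(k-1) = rk - 1$. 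Thus the sum equals $T_{r,k} - d$, which rearranges to the desired identity.

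For the Rascal-like multiplication rule, I would invoke \lmref{(linE)(linW)=(linS)(linN)} directly. The left-hand side of that lemma is exactly $T_{r,k-1}\cdot T_{r-1,k} + (cd - d_1 d_2) = T_{r,k-1}\cdot T_{r-1,k} + D$, and the right-hand side is $T_{r,k}\cdot T_{r-1,k-1}$. So whenever $T_{r-1,k-1}\ne 0$, dividing both sides by $T_{r-1,k-1}$ yields
\[
T_{r,k} \;=\; \frac{T_{r,k-1}\cdot T_{r-1,k} + D}{T_{r-1,k-1}},
\]
which is precisely the claimed formula.

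There is really no hard step here: the addition rule is a one-line bookkeeping calculation, and the multiplication rule is immediate from the cited algebraic identity. The only thing one has to be careful about is the direction of division in the multiplicative case, namely that $T_{r-1,k-1}$ must be nonzero, which is exactly the hypothesis stated in the proposition. In the writeup I would present the addition rule first (as a short calculation, so the reader sees where $d$ comes from) and then hand off the multiplicative identity to \lmref{(linE)(linW)=(linS)(linN)}, since otherwise the computation dwarfs the conceptual content of the proof.
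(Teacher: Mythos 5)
Your proposal is correct and follows essentially the same route as the paper's own proof: direct substitution of the closed form $T_{r,k}=c+kd_1+rd_2+rkd$ for the addition rule (your coefficient bookkeeping, including the $d$-coefficient computation $r(k-1)+(r-1)k-(r-1)(k-1)=rk-1$, checks out), and a direct appeal to \lmref{(linE)(linW)=(linS)(linN)} followed by division by the nonzero $T_{r-1,k-1}$ for the multiplication rule. No gaps to report.
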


\begin{proof}
	Since $T$ is a Generalized Rascal Triangle, $T_{r,k} = c+kd_1+rd_2+rkd$; thus, when $r,k\ge 1$
		\begin{align*}
			&T_{r,k-1} + T_{r-1,k} + d - T_{r-1,k-1} = (c+(k-1)d_1+rd_2+r(k-1)d)\\ 
			&\phantom{=} + (c+kd_1+(r-1)d_2+(r-1)kd) + d\\ 
			&\phantom{=}\ - (c+(k-1)d_1+(r-1)d_2+(r-1)(k-1)d)\\
			&= c+ kd_1 + rd_2 + r(k-1)d + (r-1)kd + d - (r-1)(k-1)d\\
			&= c+ kd_1 + rd_2 + rkd - rd + rkd - kd + d -rkd + rd + kd - d\\
			&= c+kd_1+rd_2+rkd\\ 
			&= T_{r,k}.\\ \\ \\
			\intertext{To show that $T$ has a Rascal-like multiplication rule, let $D=cd-d_1d_2$ and suppose $r,k\ge 1$. Then by \lemref{(linE)(linW)=(linS)(linN)}}
			&T_{r-1,k}\cdot T_{r,k-1} + D = \Big(c+(k-1)d_1+rd_2+r(k-1)d\Big)\Big (c+kd_1+(r-1)d_2+(r-1)kd\Big)\\ 
			&\phantom{=} + cd-d_1d_2\\
			&=  \Big(c+kd_1+rd_2+rkd\Big)\Big(c + (k-1)d_1 + (r-1)d_2 + (r-1)(k-1)d\Big)\\
			&=T_{r,k}T_{r-1,k-1}\\
			\intertext{Thus, when $T_{r-1,k-1}\ne 0$}
			&T_{r,k} = \dfrac{T_{r-1,k}\cdot T_{r,k-1} + D}{T_{r-1,k-1}}.
		\end{align*}
\end{proof}

Note that the additive constant $d$ for the Rascal-like addition rule is the same as the $d$ in the definition of the Generalized Rascal Triangle.
\medskip

\subsection{Rascal-like addition and multiplication imply a Generalized Rascal Triangle}\label{Ss:MultmakesGRT}

We now prove that a number triangle that has arithmetic sequences on the outside diagonals and satisfies either a Rascal-like addition rule or a Rascal-like multiplication rule for the interior numbers must be a Generalized Rascal Triangle.  

\begin{Prop}\label{add=grt}
Let $d_1, d_2, d\in \bbZ$ and $T$ be a number triangle with $T_{r,0} = T_{0,0} + rd_2$, $T_{0,k} = T_{0,0} + kd_1$, and  $T_{r,k} = T_{r,k-1}+ T_{r-1,k} + d - T_{r-1,k-1}$. Then there exists a constant $c\in \bbZ$ such that $T=\grt$.
\end{Prop}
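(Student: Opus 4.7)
The plan is to set $c = T_{0,0}$ and show by strong induction on $n = r + k$ that $T_{r,k} = c + kd_1 + rd_2 + rkd$ for all $r, k \ge 0$; once this is established, $T$ matches the defining equation of $T(c, d, d_1, d_2)$ exactly.

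For the base cases (entries with $r = 0$ or $k = 0$), the hypotheses $T_{0,k} = T_{0,0} + kd_1 = c + kd_1$ and $T_{r,0} = T_{0,0} + rd_2 = c + rd_2$ give precisely the required formula, since the $rkd$ term vanishes on the outside diagonals. This takes care of every entry with $n = r + k \le 1$, as well as all entries lying on either outside diagonal.

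For the inductive step, fix $n \ge 2$ and an interior position $(r, k)$ with $r, k \ge 1$ and $r + k = n$. The Rascal-like addition rule expresses $T_{r,k}$ in terms of $T_{r,k-1}$, $T_{r-1,k}$, and $T_{r-1,k-1}$, all of which have row-sum strictly less than $n$, so the inductive hypothesis applies to each of them. Substituting the formula for each of these three entries into
\[
T_{r,k} = T_{r,k-1} + T_{r-1,k} + d - T_{r-1,k-1}
\]
reduces to the same algebraic identity already verified in the proof of \propref{prop:GRTAdd}: the linear terms combine so that the result collapses to $c + kd_1 + rd_2 + rkd$. No new computation is needed here beyond citing the linear calculation from that earlier proof.

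There is no real obstacle in this argument; the only thing to be careful about is the induction scheme. Induction on $n = r+k$ is the natural choice because the Rascal-like addition rule relates $T_{r,k}$ to three neighbors each with strictly smaller row-sum, so the recursion is well-founded. The outside-diagonal hypotheses are what pin down the constant $c = T_{0,0}$ and force the linear behavior on the boundary; without them the additive rule alone would leave the boundary values (and hence the whole triangle) under-determined, which is exactly the phenomenon illustrated by Examples~\ref{ex3} and \ref{ex5}.
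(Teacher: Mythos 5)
Your proposal is correct and follows essentially the same route as the paper: set $c=T_{0,0}$, verify the formula on the exterior diagonals directly from the hypotheses, and then induct on the row number $n=r+k$ (justified by \lemref{L:constantr+k}), substituting the inductive formula into the Rascal-like addition rule, whose simplification is exactly the linear computation already carried out in \propref{prop:GRTAdd}. The only cosmetic difference is that you cite that earlier calculation while the paper repeats the algebra inline, and you absorb the paper's explicit $r=k=1$ base case into the boundary cases, which is harmless since the three neighbors of any entry with $r+k=2$ lie on the exterior diagonals.
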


\begin{proof}
	Let $c=T_{0,0}$.  To prove that $T=\grt$, we first note that 
	\begin{align*}
		T_{r,0} &= c + rd_2 = c+ 0d_1 + rd_2 + r\cdot 0 d\\
		\intertext{and}
		T_{0,k} &= c + kd_1 = c + kd_1 + 0d_2 + 0\cdot kd,\\ 
	\end{align*}
so on the exterior diagonals, $T_{r,k} = c + kd_1+rd_2 +rkd$.  For the interior numbers $T_{r,k}$ with $r,k\ge 1$, we prove that $T_{r,k} = c kd_1 + rd_2 +rkd$ by induction on the row number $N$ for $N\ge 2$. By \lemref{L:constantr+k}, $N=r+k$ for each entry $T_{r,k}$ on $N$.

Suppose $r, k \in \bbN$ with $r+k=2$.  Since $r,k \ge 1,\ r+k=2$ means $r=1$ and $k=1$ and so $T_{r,k-1} = T_{1,0} = c+ d_2$, $T_{r-1,k} = T_{0,1} = c+d_1$ and $T_{r-1,k-1}=T_{0,0}=c$.  Since
	\begin{align*}
		T_{1,1}&=T_{0,1}+T_{1,0} +d - T_{0,0}\\
		\intertext{we have}
		T_{1,1} &= c+d_1 + c+d_2 + d - c = c+d_1+d_2 + d.
	\end{align*}

Now suppose for that $r, k \in\bbN$ with $2\le r+k < N$, $T_{r,k} = c+kd_1+rd_2+rkd$. Then for $r+k=N$ we have $(r-1) + k = N-1$ and $r+(k-1) = N-1$. Using the induction hypothesis and the addition rule we get that
	\begin{align*}
		T_{r,k} &= T_{r,k-1}+ T_{r-1,k} + d - T_{r-1,k-1}\\
			&= \big(c + (k-1)d_1 + rd_2 + r(k-1)d\big) + \big(c + kd_1 + (r-1)d_2 + (r-1)kd\big) + d\\ 
			&\phantom{=} - \big(c + (k-1)d_1 + (r-1)d_2 + (r-1)(k-1)d\big)\\
			&=c +kd_1+rd_2 + rkd
	\end{align*}
Thus, $T_{r,k} = c + kd_1+rd_2 +rkd$ for $r,k\ge 0$, and so $T$ is a Generalized Rascal Triangle.
\end{proof}

\begin{Prop}\label{mult=grt}
Let $D, d_1,d_2 \in \bbZ$ and $T$ be a number triangle with $T_{r,0} = T_{0,0} + rd_2$, $T_{0,k} = T_{0,0} + kd_1$, $T_{r,k} \ne 0$ for all $r,k \ge 0$ and  $T_{r,k} = \dfrac{T_{r,k-1}\cdot T_{r-1,k} + D}{T_{r-1,k-1}}$. Then there exist constants $c,d\in \bbZ$ such that $T=\grt$ and $D=cd-d_1d_2$.
\end{Prop}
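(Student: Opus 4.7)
My plan is to mirror the structure of the proof of \propref{add=grt} but with one extra wrinkle: whereas in the additive case both constants $c$ and $d$ are immediately visible from the exterior diagonals and the additive rule, here only $c$ is handed to us and $d$ must be extracted from the multiplicative datum $D$. The natural choice is $c=T_{0,0}$, and to find $d$ I will exploit the single entry $T_{1,1}$. Applying the multiplication rule at $(r,k)=(1,1)$ gives
\[
c\cdot T_{1,1} = T_{1,0}\cdot T_{0,1} + D = (c+d_2)(c+d_1)+D = c^2+cd_1+cd_2+d_1d_2+D,
\]
so that $c(T_{1,1}-c-d_1-d_2)=D+d_1d_2$. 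I therefore define $d:=T_{1,1}-c-d_1-d_2\in\bbZ$ (an integer because the entries of a number triangle are integers), and this choice automatically forces the identity $D=cd-d_1d_2$ demanded by the statement.

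With $c$ and $d$ in hand, the task reduces to showing $T_{r,k}=c+kd_1+rd_2+rkd$ for all $r,k\ge 0$. I will do this by induction on the row number $N=r+k$, using \lemref{L:constantr+k} so that the inductive hypothesis controls every entry strictly above row $N$. The cases $N=0$ and $N=1$ are given by the two exterior-diagonal hypotheses; the case $N=2$ reduces to verifying $T_{1,1}=c+d_1+d_2+d$, which holds by the very definition of $d$.

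For the inductive step, fix $r,k\ge 1$ with $r+k=N\ge 3$. The three entries $T_{r,k-1}$, $T_{r-1,k}$, $T_{r-1,k-1}$ all lie in rows $<N$, so by induction they are the linear expressions prescribed by \eqref{eq:GenRasEq}. Plugging these into the multiplication rule yields
\[
T_{r-1,k-1}\cdot T_{r,k} = T_{r,k-1}\cdot T_{r-1,k}+D = T_{r,k-1}\cdot T_{r-1,k}+cd-d_1d_2,
\]
and \lemref{(linE)(linW)=(linS)(linN)} identifies the right-hand side exactly with
\[
\bigl(c+kd_1+rd_2+rkd\bigr)\bigl(c+(k-1)d_1+(r-1)d_2+(r-1)(k-1)d\bigr).
\]
The second factor is $T_{r-1,k-1}$ by the inductive hypothesis, and it is nonzero by assumption, so dividing gives $T_{r,k}=c+kd_1+rd_2+rkd$, as required.

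The only real obstacle is making sure $d$ is well defined as an integer and that the algebraic identity $D=cd-d_1d_2$ emerges organically rather than being imposed; the computation at $(1,1)$ disposes of both issues at once. Everything after that is a bookkeeping induction whose engine is \lemref{(linE)(linW)=(linS)(linN)}, exactly as in \propref{prop:GRTAdd}.
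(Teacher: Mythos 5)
Your proposal is correct and follows essentially the same route as the paper's own proof: both set $c=T_{0,0}$ and $d=T_{1,1}-T_{0,1}-T_{1,0}+T_{0,0}$, extract $D=cd-d_1d_2$ from the multiplication rule applied at $(1,1)$, and then run a strong induction on the row number $N=r+k$ whose inductive step is powered by \lemref{(linE)(linW)=(linS)(linN)} and the nonvanishing of $T_{r-1,k-1}$. Your derivation of $D=cd-d_1d_2$ is in fact a slight streamlining (you clear the denominator at $(1,1)$ directly rather than dividing and regrouping as the paper does), but this is a cosmetic difference, not a different argument.
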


Note that since we are assuming all the interior numbers satisfy a Rascal-like multiplication rule, we require that $T_{r,k}\ne 0$ for all $r,k\ge 0$.

\begin{proof} We first establish the relation $D=cd-d_1d_2$. Let $c=T_{0,0}$, and $d=T_{1,1}-T_{0,1}-T_{1,0} + T_{0,0}$.  Since $T_{r,k}\ne 0$ for all, $r,k \ge 0$, we have that $c = T_{0,0}\ne 0$ and so
		\begin{align}
			d &= \dfrac{dT_{0,0}}{T_{0,0}}\notag\\
					&= \dfrac{T_{0,0}[(T_{1,1}-T_{0,1}-T_{1,0}+T_{0,0})]}{T_{0,0}}\notag\\
					&= \dfrac{T_{0,0}\cdot T_{1,1} - T_{0,0}\cdot T_{0,1} - T_{0,0}\cdot T_{1,0} + T_{0,0}^2}{T_{0,0}}\notag\\
					&= \dfrac{T_{0,1}\cdot T_{1,0} + D - T_{0,0}\cdot T_{0,1} - T_{0,0}\cdot T_{1,0} + T_{0,0}^2}{T_{0,0}}\notag\\
					&= \dfrac{D + (T_{0,1}\cdot T_{1,0} - T_{0,0}\cdot T_{0,1} - T_{0,0}\cdot T_{1,0} + T_{0,0}^2)}{T_{0,0}}\notag\\
					&= \dfrac{D+ (T_{0,1}-T_{0,0})(T_{1,0}-T_{0,0})}{T_{0,0}}\notag\\
					&= \dfrac{D+ d_1d_2}{c}\notag\\
			\intertext{Thus,}
			D &= cd - d_1d_2.
		\end{align}

To prove that $T=\grt$, we first note that 
	\begin{align*}
		T_{r,0} &= c + rd_2 = c+ 0d_1 + rd_2 + r\cdot 0 d\\
		\intertext{and}
		T_{0,k} &= c + kd_1 = c + kd_1 + 0d_2 + 0\cdot kd,\\ 
	\end{align*}
so on the exterior diagonals, $T_{r,k} = c + kd_1+rd_2 +rkd$.  For the interior numbers $T_{r,k}$ with $r,k\ge 1$, we prove $T_{r,k} = c kd_1 + rd_2 +rkd$ by induction on the row number $N$ for $N\ge 2$. By \lemref{L:constantr+k}, $N=r+k$ for each entry $T_{r,k}$ on $N$.

Suppose $r, k \in \bbN$ with $r+k=2$.  Since $r,k \ge 1,\ r+k=2$ means $r=1$ and $k=1$ and so $T_{r,k-1} = T_{1,0} = c+ d_2$, $T_{r-1,k} = T_{0,1} = c+d_1$ and $T_{r-1,k-1}=T_{0,0}=c$.  Since
	\begin{align*}
		d &= T_{1,1}-T_{0,1}-T_{1,0} + T_{0,0}\\
		\intertext{we have}
		T_{1,1} &= T_{0,1} + T_{1,0}-T_{0,0} + d= c + d_1 + d_2 + d.
	\end{align*}

Now suppose for that $r, k \in\bbN$ with $2\le r+k < N$, $T_{r,k} = c+kd_1+rd_2+rkd$. Then for $r+k=N$ we have $(r-1) + k = N-1$ and $r+(k-1) = N-1$. Using the induction hypothesis, the multiplication rule and  \lemref{(linE)(linW)=(linS)(linN)} we have that
	\begin{align*}
		T_{r,k} &= \dfrac{T_{r,k-1}\cdot T_{r-1,k} + D}{T_{r-1,k-1}}\\
		&= \dfrac{(c + (k-1)d_1 + rd_2 + r(k-1)d)(c + kd_1 + (r-1)d_2 + (r-1)kd) +cd-d_1d_2}{(c + (k-1)d_1 + (r-1)d_2 + (r-1)(k-1)d)}\\
		&= \dfrac{(c+kd_1+rd_2+rkd)(c + (k-1)d_1 + (r-1)d_2 + (r-1)(k-1)d)}{(c + (k-1)d_1 + (r-1)d_2 + (r-1)(k-1)d)}\\
		&= c+kd_1+rd_2+rkd\label{eq:Trkeq}
	\end{align*}
Thus, $T_{r,k} = c + kd_1+rd_2 +rkd$ for $r,k\ge 0$, and so $T$ is a Generalized Rascal Triangle.
\end{proof}

Combining \threepropref{prop:GRTAdd}{add=grt}{mult=grt} gives us the following theorem:

\begin{Thm}\label{thm:GRTMultThm}
	Let $c, d, d_1, d_2 \in \bbZ$ and $T$ be a number triangle with the arithmetic sequences $T_{0,k} = c +kd_1$ and $T_{r,0} = c+rd_2$ on the exterior diagonals. Then $T$ is the Generalized Rascal Triangle $T(c, d, d_1, d_2)$ and if and only if 
		\begin{align*}
			T_{r,k} &= c + kd_1+rd_2 +rkd\\
			\intertext{and whenever $T_{r-1,k-1}\ne 0$}
			T_{r,k} &= \dfrac{T_{r-1,k}\cdot T_{r,k-1} + D}{ T_{r-1,k-1}}.\\
		\end{align*}
	where $D=cd-d_1d_2$.
\end{Thm}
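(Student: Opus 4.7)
The plan is to obtain the theorem as a direct synthesis of the three propositions just proved, so that no new combinatorial or algebraic work is required beyond book-keeping. The statement bundles the closed-form description of a Generalized Rascal Triangle with the Rascal-like addition and multiplication rules; since we already have (i) the forward implication GRT $\Rightarrow$ both rules from \propref{prop:GRTAdd}, and (ii) the two converse implications from \propref{add=grt} and \propref{mult=grt}, the argument reduces to noting that the hypotheses of one proposition match the conclusions of another.

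For the forward direction, I would suppose $T=\grt$ and immediately apply \propref{prop:GRTAdd}. That proposition gives both the Rascal-like addition rule with additive constant $d$ and, whenever $T_{r-1,k-1}\ne 0$, the Rascal-like multiplication rule with multiplicative constant $D=cd-d_1d_2$. The closed-form $T_{r,k}=c+kd_1+rd_2+rkd$ is then the defining equation \Eqref{eq:GenRasEq} itself, so nothing further is needed.

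For the reverse direction, I would assume that $T$ has the exterior arithmetic diagonals $T_{0,k}=c+kd_1$ and $T_{r,0}=c+rd_2$ and that the interior of $T$ satisfies the stated addition and multiplication rules. Setting $c:=T_{0,0}$, I would invoke \propref{add=grt} (if reading the theorem's first displayed equation as the addition rule) to conclude $T=\grt$; alternatively, \propref{mult=grt} applied with the same $c$ and with $d:=T_{1,1}-T_{0,1}-T_{1,0}+T_{0,0}$ gives $T=\grt$ together with the compatibility $D=cd-d_1d_2$. Either route suffices on its own, so assuming both rules is in fact more than necessary.

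The only delicate point is to verify that the several roles of $d$ and $D$ across the three propositions are mutually consistent: the $d$ appearing as the GRT parameter must agree with the $d$ that \propref{mult=grt} extracts from the four corner entries, and $D$ must equal $cd-d_1d_2$ in either characterization. This is automatic, since under either hypothesis $T$ is forced to equal $\grt$, and then $T_{1,1}-T_{0,1}-T_{1,0}+T_{0,0}=d$ directly from \Eqref{eq:GenRasEq}. Apart from this check, the entire proof is a short three-line invocation of \threepropref{prop:GRTAdd}{add=grt}{mult=grt}, and I do not anticipate any genuine obstacle.
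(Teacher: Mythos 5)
Your proposal is correct and coincides with the paper's own treatment: the paper offers no separate proof of this theorem at all, stating only that it is obtained by combining \threepropref{prop:GRTAdd}{add=grt}{mult=grt}, which is exactly the synthesis you describe, including the forward direction via \propref{prop:GRTAdd} and the converse via the other two propositions. The one caveat is that your remark that the multiplication-rule route ``suffices on its own'' quietly relies on the hypothesis $T_{r,k}\ne 0$ for all $r,k\ge 0$ built into \propref{mult=grt} but absent from the theorem's statement, so the addition-rule (or closed-form) route is the one that carries the converse unconditionally.
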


We should note that Generalized Rascal Triangles can contain entries that are zero, but because they have both a Rascal-like addition and a Rascal-like multiplication; we will, when necessary, use the Rascal-like addition instead of the Rascal-like multiplication.

\section{Properties of Generalized Rascal Triangles}\label{Ss:grtpatterns}

\subsection{Arithmetic diagonals implies a Generalized Rascal Triangle} As we observed in \twoeqref{eq:majorarthseq}{eq:minorarthseq}, if $c,d,d_1,d_2 \in \bbZ$, then one consequence of the definition of the Generalized Rascal Triangle $T(c,d,d_1,d_2)$ is that all major and minor diagonals are arithmetic sequences.  Not surprisingly, the converse of this observation is true; if all the major and minor diagonals of a number triangle, $T$, are arithmetic sequences, then $T$ is a Generalized Rascal Triangle.  

We start by showing that the constant differences for the arithmetic sequences on the diagonals change by a fixed amount as we move from one diagonal to the next.

\begin{Lma}\label{Lma:constdifffixedchange}
	Let $T$ be a number triangle with arithmetic sequences on all major and minor diagonals and let $M_r(k) = T_{r,0} + kc_r$ and $m_k(r) = T_{0,k}+rb_k$ denote the arithmetic sequences on the major and minor diagonals respectively.   Then there exists a constant $d\in \bbZ$ such that $d=c_r-c_{r-1} = b_k-b_{k-1}, \forall r,k \ge 1$. 
\end{Lma}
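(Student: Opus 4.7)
The plan is to use the fact that each interior entry $T_{r,k}$ lies on both a major and a minor diagonal, so we can compute it in two different ways and equate the results. Specifically, walking down the $r^{\tth}$ major diagonal from its starting entry $T_{r,0}$ gives $T_{r,k} = T_{r,0} + kc_r$, while walking down the $k^{\tth}$ minor diagonal from $T_{0,k}$ gives $T_{r,k} = T_{0,k} + rb_k$. But $T_{r,0}$ and $T_{0,k}$ themselves lie on the outside minor and outside major diagonals respectively (which are just the $k=0$ and $r=0$ sequences in the given notation), so $T_{r,0} = T_{0,0} + rb_0$ and $T_{0,k} = T_{0,0} + kc_0$. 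Substituting and cancelling $T_{0,0}$ yields the master identity
\begin{equation*}
rb_0 + kc_r \;=\; kc_0 + rb_k \quad\text{for all } r,k \ge 0,
\end{equation*}
or equivalently $r(b_k - b_0) = k(c_r - c_0)$.

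From here I would extract $d$ by specialization. Setting $r=k=1$ in the identity gives $b_1 - b_0 = c_1 - c_0$, so I define $d$ to be this common integer. Setting $k=1$ and letting $r \ge 1$ be arbitrary gives $c_r - c_0 = r(b_1 - b_0) = rd$, so $c_r = c_0 + rd$ and therefore $c_r - c_{r-1} = d$ for every $r \ge 1$. By the symmetric specialization $r=1$ with general $k \ge 1$, I get $b_k = b_0 + kd$ and $b_k - b_{k-1} = d$. This produces the single constant $d$ with $d = c_r - c_{r-1} = b_k - b_{k-1}$ for all $r,k \ge 1$, as required.

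I do not anticipate a real obstacle here; the argument is essentially linear algebra over the two indexings of the same entry. The one place to be careful is the indexing convention: one must remember that the outside minor diagonal carries the common difference $b_0$ (so that $T_{r,0} = T_{0,0} + rb_0$) and the outside major diagonal carries $c_0$, rather than swapping them. Also, the equality of the two candidate constants (the one coming from the major diagonals and the one coming from the minor diagonals) is not a separate axiom but a free consequence of the master identity at $r=k=1$; this is the small but essential observation that makes the lemma work.
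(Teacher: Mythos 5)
Your proof is correct, but it takes a genuinely different route from the paper's. The paper argues \emph{locally}: for any $r,k\ge 1$ it computes $T_{r,k}$ from its northwest neighbor $T_{r-1,k-1}$ along the two sides of a unit diamond, namely $T_{r,k}=T_{r-1,k-1}+b_{k-1}+c_r$ and $T_{r,k}=T_{r-1,k-1}+c_{r-1}+b_k$, and equating these yields the mixed-difference identity $c_r-c_{r-1}=b_k-b_{k-1}$ for all $r,k\ge 1$ in one step; setting $d=c_1-c_0=b_1-b_0$ then finishes immediately, with no reference to the apex or to the outside diagonals. You instead argue \emph{globally}: you express $T_{r,k}$ by two full walks from $T_{0,0}$ (down the outside minor diagonal and then along the $\rth$ major diagonal, versus along the outside major diagonal and then down the $\kth$ minor diagonal), obtaining the integrated identity $k(c_r-c_0)=r(b_k-b_0)$, and you recover $d$ by the specializations $r=k=1$, then $k=1$, then $r=1$. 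Your route requires the extra (correct) observation that $T_{r,0}=T_{0,0}+rb_0$ and $T_{0,k}=T_{0,0}+kc_0$, i.e., that the diagonal starting points themselves lie on the outside diagonals, and your care with the $b_0$ versus $c_0$ indexing is warranted (the proposition following the lemma in the paper momentarily swaps those labels in its displayed definitions of $d_1$ and $d_2$). What your approach buys is the closed forms $c_r=c_0+rd$ and $b_k=b_0+kd$, which is essentially the content of the proposition that the paper proves next, so you establish a bit more in passing; what the paper's approach buys is economy, since the one-step diamond identity delivers the constancy of both difference sequences and their equality simultaneously, using only adjacent entries.
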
  

\begin{proof}
	 Since $T_{r,k} = M_r(k)=m_k(r)$, $T_{r,k} = T_{r,k-1} + c_r = T_{r-1,k} + b_k\ \forall r,k\ge 1$. Let $r, k \ge 1$ be arbitrary; then, 
		\begin{align}
			T_{r,k} &= T_{r,k-1} + c_r = T_{r-1,k-1} + b_{k-1} + c_r\\
				     &= T_{r-1,k} + b_k  = T_{r-1,k-1} + c_{r-1} + b_k\\
			\intertext{which means}
			b_{k-1}+ c_r &= c_{r-1} + b_k\notag\\
			c_r-c_{r-1} &= b_k-b_{k-1}
		\end{align}
	Since this is true for all $r,k \ge 1$, if we let $d=c_1-c_0 = b_1-b_0$ we get that $d = c_r-c_{r-1}=b_k-b_{k-1}$ for all $r, k \ge 1$.
\end{proof}

\begin{Prop}
	Let $T$ be a number triangle with arithmetic sequences on all major and minor diagonals.  Then $T$ is a Generalized Rascal Triangle.
\end{Prop}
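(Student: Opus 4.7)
The plan is to identify the four constants $c, d_1, d_2, d$ from the given arithmetic data and then recover the closed form $T_{r,k} = c + kd_1 + rd_2 + rkd$ by reading off a single diagonal.

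First I would set $c = T_{0,0}$, $d_1 = c_0$ (the common difference on the outside major diagonal $M_0$), and $d_2 = b_0$ (the common difference on the outside minor diagonal $m_0$). With $d$ the constant produced by \lemref{Lma:constdifffixedchange}, a simple induction (really just telescoping) on $r$ and $k$ then gives $c_r = c_0 + rd = d_1 + rd$ and $b_k = b_0 + kd = d_2 + kd$ for every $r,k \ge 0$.

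Next I would use the $k^{\text{th}}$ minor diagonal to compute an arbitrary interior entry. Since $m_k(r) = T_{0,k} + rb_k$ and $T_{0,k}$ lies on the outside major diagonal, we have $T_{0,k} = c + kd_1$. Substituting the expression for $b_k$ produced above,
\begin{equation*}
T_{r,k} \;=\; T_{0,k} + rb_k \;=\; (c + kd_1) + r(d_2 + kd) \;=\; c + kd_1 + rd_2 + rkd.
\end{equation*}
(Reading off the $r^{\text{th}}$ major diagonal instead and using $T_{r,0} = c + rd_2$ gives the same formula, which serves as a consistency check.) This is exactly \Eqref{eq:GenRasEq}, so $T = T(c,d,d_1,d_2)$.

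There is no real obstacle here; the entire content of the proposition sits in \lemref{Lma:constdifffixedchange}, which already forces the two families of common differences to vary linearly with a single slope $d$. Once that is in hand, the bilinear form of $T_{r,k}$ is a one-line substitution, and no induction on row number is needed beyond the trivial one used to pass from the recursive definition of the $c_r$ and $b_k$ to their closed form.
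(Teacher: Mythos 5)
Your proposal is correct and matches the paper's own argument in all essentials: both hinge on \lemref{Lma:constdifffixedchange}, telescope the common differences to a linear form, and then substitute into one family of diagonals anchored by an outside diagonal. The only difference is cosmetic --- you read off the minor diagonals $m_k(r)=T_{0,k}+rb_k$ with $b_k=d_2+kd$, while the paper reads off the major diagonals $M_r(k)=T_{r,0}+kc_r$ with $c_r=d_1+rd$, a mirror-image choice yielding the identical computation.
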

\begin{proof}
 We must show that there exists constants $c,d, d_1,d_2\in \bbZ$ such that $T=T(c, d, d_1,d_2)$.  Let $c=T_{0,0}$; and let $M_r(k) = T_{r,0} + kc_r$ and $m_k(r) = T_{0,k}+rb_k$ denote the arithmetic sequences on the major and minor diagonals respectively.  By \lemref{Lma:constdifffixedchange} there is a constant $d\in \bbZ$ so that $d=c_r-c_{r-1} = b_k-b_{k-1}, \forall r,k \ge 1$. Let
 	\begin{align*}
		d_1 &= T_{1,0} - T_{0,0} = c_0\\
		d_2 &= T_{0,1} - T_{0,0} = b_0\\
		\intertext{then}
		T_{r,k} &= M_r(k) = T_{r,0} + kc_r = m_0(r) + kc_r = T_{0,0} + rb_0 + kc_r\\
			     &= c + rd_2 + k(c_{r-1} + d) = \cdots = c + rd_2 + k(c_0 + rd)\\ 
			     &= c + rd_2 + kd_1 + rkd 
	\end{align*}
\end{proof}

\begin{Ex}
	Consider the number Triangle $W$ from \exref{ex4} in \secref{S:MExGRTs}:
		\begin{figure}[H]
			\centering
			\includegraphics[scale=0.5]{GenRasTri1_6Rows.eps}
			\caption{The first six rows of $W$.}\label{Fi:GenRasTri1}
		\end{figure}
	\noindent This number triangle has arithmetic sequences on all diagonals with $c=1$; the major diagonals are the arithmetic sequences 
		\begin{align*}
			1, 3, 5, 7, 9,&\dots\\
			4, 11, 18, 25, &\dots\\
			7, 19, 31, 43, &\dots\\
			\phantom{7,19,}\vdots \phantom{, 43,} &\phantom{\dots}\\
		\end {align*}
	\noindent so $d_1=2$.  The minor diagonals are the arithmetic sequences 
		\begin{align*}
			1, 4, 7, 10,13, &\dots\\
			3, 11, 19, 27, &\dots\\
			5, 18, 31, 44, &\dots\\
			\phantom{5,18,}\vdots \phantom{, 44,} &\phantom{\dots}\\
		\end{align*}
	\noindent which means $d_2 = 3$.  The differences for the arithmetic sequences change by 5 each time so $d=5$; and so 
		\begin{align*}
			T_{r,k} &= 1+3k+2r+5rk\\
			\intertext{which means $T=T(1,2,3,5)$.  The Rascal-like addition rule is}
			T_{r,k} &= T_{r-1,k} + T_{r,k-1} + 5 - T_{r-1,k-1}\\
			\intertext{For the Rascal-like multiplication rule}
			D&=cd-d_1d_2 = 1\cdot 5 - 2\cdot 3 = -1
			\intertext{which means}
			T_{r,k} &= \dfrac{T_{r-1,k}\cdot T_{r,k-1} - 1}{T_{r-1,k-1}}.
		\end{align*}
\end{Ex}
\subsection{Uniqueness of the Rascal Triangle} While we have seen that the Rascal Triangle is not the only number triangle that is generated by both a Rascal-like multiplication rule and an Rascal-like addition rule, the Rascal Triangle is unique in the sense that if $T$ is a Generalized Rascal Triangle with $d=1$ and $c,d_1,d_2\in \bbZ$ with $D=cd-d_1d_2 = c-d_1d_2=1$ then $T$ sits inside the Rascal Triangle as a sub-triangle.
	
\begin{Def}
	A number triangle $T'$ is called a {\bf sub-triangle} of a number triangle $T$ if there exists $r_0,k_0 \in \bbN$ such that $T'_{r,k} = T_{r_0+r,k_0+k}$.
\end{Def}

\begin{figure}[H]
	\centering
	\includegraphics[scale=0.375]{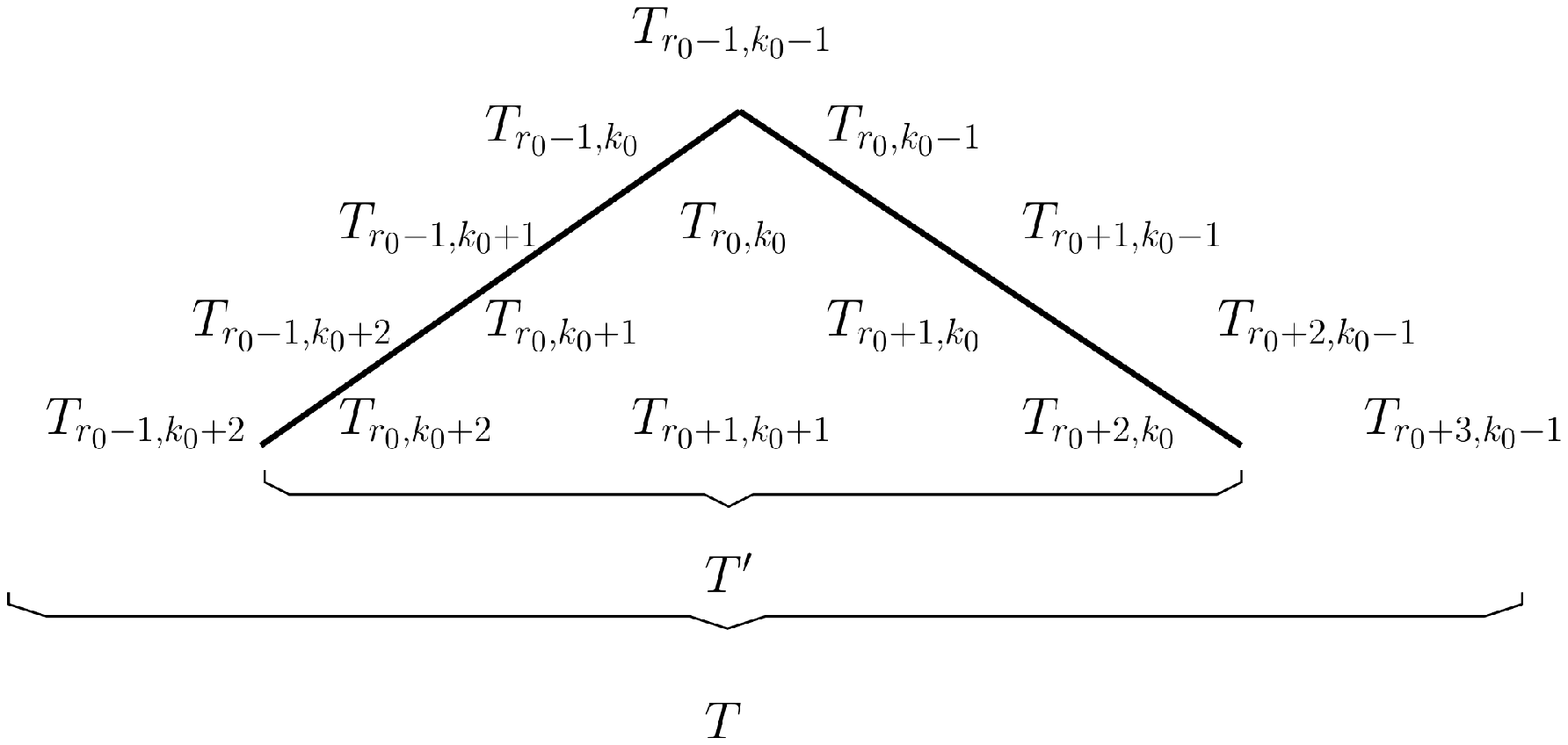}
	\caption{Sub-Triangle $T'$ starting at $T_{r_0,k_0}$.}
\end{figure}

\begin{Cor}\label{D=d=1}
	Let $c,d_1,d_2 \in \bbZ$, such that $c-d_1d_2=1$; and let $T$ be the Generalized Rascal Triangle $T(c,1,d_1,d_2)$.  Then $T$ is a sub-triangle of the Rascal Triangle, $R$.
\end{Cor}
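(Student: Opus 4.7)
The plan is to exhibit the offset $(r_0,k_0)$ explicitly and verify that the Generalized Rascal Triangle formula for $T$ matches the Rascal entry $R_{r_0+r,k_0+k}$ term by term. By the definition of $\grt$ with $d=1$, we have
\begin{equation*}
T_{r,k}=c+kd_1+rd_2+rk,
\end{equation*}
while the Rascal Triangle is given by $R_{r,k}=1+rk$. So the question is essentially an algebraic identity: can we translate the indices so that $1+(r_0+r)(k_0+k)$ becomes $c+kd_1+rd_2+rk$?

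Expanding the Rascal expression at the shifted index gives
\begin{equation*}
R_{r_0+r,k_0+k}=1+r_0k_0+r_0k+k_0r+rk.
\end{equation*}
Matching coefficients with $T_{r,k}$ suggests the natural choice $r_0=d_1$ and $k_0=d_2$, which forces the constant terms to agree exactly when $1+d_1d_2=c$, i.e. when $c-d_1d_2=1$. This is the given hypothesis, so the identity $T_{r,k}=R_{d_1+r,\,d_2+k}$ holds for all $r,k\ge 0$.

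Therefore the proof I would write is short: set $r_0:=d_1$ and $k_0:=d_2$, use the closed form from the definition of $\grt$, substitute into $R_{r_0+r,k_0+k}=1+(r_0+r)(k_0+k)$, and apply $c=1+d_1d_2$ to conclude $T_{r,k}=R_{r_0+r,k_0+k}$ for every $r,k\ge 0$, which is precisely the sub-triangle condition. The only genuine subtlety is making sure the shifted indices are legitimate positions in $R$, which requires $r_0,k_0\in\bbN$; this is implicit in the corollary's setup (one reads $d_1,d_2\ge 0$, as is consistent with the examples of Generalized Rascal Triangles appearing earlier). There is no real obstacle — the content of the corollary is exactly the algebraic identity above, dressed up in the language of sub-triangles.
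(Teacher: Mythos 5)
Your proposal is correct and matches the paper's own proof essentially verbatim: the paper likewise sets the offset to $(d_1,d_2)$ and factors $c+kd_1+rd_2+rk = 1+d_1d_2+kd_1+rd_2+rk = 1+(d_1+r)(d_2+k) = R_{d_1+r,\,d_2+k}$ using $c=1+d_1d_2$. Your aside about needing $d_1,d_2\in\bbN$ for the shifted indices to be legitimate is a fair observation that the paper leaves equally implicit, so there is nothing to add.
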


\begin{proof}
	Since $c-d_1d_2=1$, $c=1+d_1d_2$, so $T_{0,0} = c=1+d_1d_2 = R_{d_1,d_2}$, and 
		\begin{align*}
			T_{r,k} &= c+kd_1 + rd_2 + rk\\
				&= 1+d_1d_2+ kd_1 + rd_2 + rk\\
				&=1+(d_1+r)(d_2+k)\\
				&=R_{d_1+r,d_2+k}
		\end{align*}
	Thus $T$ is the sub-triangle of $R$ starting at $R_{d_1,d_2}$.
\end{proof}
	
The original Rascal Triangle, $R$, corresponds to the Generalized Rascal Triangle, $T(1,1,0,0)$.  If we take $c=d$ and $d_1=d_2=0$, then $T(c,c,0,0)$ is a multiple of $R$.
	
\begin{Def} 
	A number triangle $T'$ is called a {\bf multiple} of a number triangle $T$ if there exists a constant $m$ such that for $r,k \ge 0$, $T'_{r,k} = mT_{r,k}$.
\end{Def}
	
\begin{Cor}\label{Cor:GRT=cRT}
	Let $c\in \bbZ$, and let $T$ be the Generalized Rascal Triangle, $T(c,c,0,0)$.  Then $T=cR$.
\end{Cor}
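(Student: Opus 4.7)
The plan is to unpack the definition of the Generalized Rascal Triangle with the specified parameters and match it directly against the formula for the Rascal Triangle. Concretely, I would apply \Eqref{eq:GenRasEq} with $c_1 = c$, $d = c$, $d_1 = 0$, $d_2 = 0$ to obtain
\begin{equation*}
T_{r,k} = c + k\cdot 0 + r\cdot 0 + rk\cdot c = c + crk = c(1 + rk).
\end{equation*}

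Next I would invoke the explicit formula $R_{r,k} = 1 + rk$ for the Rascal Triangle, which is recorded in the excerpt just after the introduction of $T(1,1,0,0)$. Substituting gives $T_{r,k} = c\, R_{r,k}$ for every $r, k \ge 0$, which is exactly the condition in the definition of a \textbf{multiple} of a number triangle (with multiplier $m = c$). Hence $T = cR$, as required.

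There is essentially no obstacle here; the statement is an immediate specialization of the defining formula, and the only bookkeeping is to verify that the parameters are being read consistently with the convention $T(c, d, d_1, d_2)$ so that the fourth argument (the coefficient of $rk$) is indeed $c$. No induction, no appeal to the addition or multiplication rules, and no case analysis is needed, so the proof should occupy only a couple of lines.
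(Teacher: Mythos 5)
Your proof is correct and matches the paper's own argument exactly: both substitute $d=c$, $d_1=d_2=0$ into \Eqref{eq:GenRasEq} to get $T_{r,k}=c(1+rk)=cR_{r,k}$ and then invoke the definition of a multiple. Nothing further is needed.
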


The proof of this is left to the reader.
\medskip
	
\subsection{Student Discovered Properties in Generalized Rascal Triangles.} In the Fall 2015, I challenged students in one of my Mathematics for Liberal Arts classes to find patterns in the Rascal Triangle.  To my delight, they discovered several properties that, as far as I can tell, were unknown at the time; see \textit{Student Inquiry and the Rascal Triangle} \cite{H} for details about my student's work.   Further investigations showed that these properties were also present in Generalized Rascal Triangles;  we conclude by presenting proofs of several of these properties, as well as some others, for Generalized Rascal Triangles.   

\begin{Prop}
	Let $c,d,d_1,d_2\in\bbZ$ and $T=\grt$ be the associated Generalized Rascal Triangle; then the row sum, $s_n$, for the $\nth$ row is 
				\begin{equation*}
					s_n = \dfrac{d}6 n^3 + \left(\dfrac{d_1+d_2}2\right)n^2 + \left(c+\dfrac{d_1+d_2}2 - \dfrac{d}6\right)n +c.
				\end{equation*}
\end{Prop}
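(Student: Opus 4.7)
The plan is to compute the row sum directly from the closed-form expression $T_{r,k} = c + kd_1 + rd_2 + rkd$ given in the definition of $\grt$, exploiting the fact that the $\nth$ row is parametrized in a very clean way.

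First I would invoke \lemref{L:constantr+k}: the entries on the $\nth$ row are exactly $T_{n-k,\,k}$ for $k=0,1,\dots,n$. Substituting $r = n-k$ into the defining formula gives
\[
s_n \;=\; \sum_{k=0}^{n}\bigl(c + kd_1 + (n-k)d_2 + k(n-k)d\bigr).
\]
I would then split this into four sums and evaluate each in turn, using only the standard identities
\[
\sum_{k=0}^{n} 1 = n+1,\qquad \sum_{k=0}^{n} k = \frac{n(n+1)}{2},\qquad \sum_{k=0}^{n} k^2 = \frac{n(n+1)(2n+1)}{6}.
\]
The only slightly nontrivial piece is
\[
\sum_{k=0}^{n} k(n-k) \;=\; n\sum_{k=0}^{n} k \;-\; \sum_{k=0}^{n} k^2 \;=\; \frac{n^2(n+1)}{2} - \frac{n(n+1)(2n+1)}{6} \;=\; \frac{n(n+1)(n-1)}{6},
\]
after factoring $n(n+1)/6$ out of both terms. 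Similarly $\sum_{k=0}^n(n-k) = n(n+1)/2$.

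Assembling the pieces gives
\[
s_n \;=\; c(n+1) + (d_1+d_2)\frac{n(n+1)}{2} + d\,\frac{n(n+1)(n-1)}{6}.
\]
Finally I would expand each term as a polynomial in $n$, noting $n(n+1)(n-1) = n^3 - n$ and $n(n+1) = n^2 + n$, and collect coefficients of $n^3, n^2, n, 1$ to obtain exactly the claimed formula. There is no real obstacle here; the main bookkeeping step is just making sure the linear coefficient $c + (d_1+d_2)/2 - d/6$ comes out correctly, which it does because the $-n$ from $n^3 - n$ contributes $-d/6$ while the $+n$ pieces from $c(n+1)$ and $(d_1+d_2)(n^2+n)/2$ contribute $c$ and $(d_1+d_2)/2$ respectively. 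The constant term $c$ comes solely from $c(n+1)$, since every other summand vanishes at $n=0$.
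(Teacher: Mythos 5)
Your proposal is correct and follows essentially the same route as the paper: both invoke \lemref{L:constantr+k} to parametrize the $\nth$ row, substitute into the closed form $T_{r,k}=c+kd_1+rd_2+rkd$, split the sum into four pieces, and evaluate them with the standard power-sum identities (the paper sums over $r$ with $k=n-r$, you sum over $k$ with $r=n-k$, which is the same computation by symmetry). Your intermediate factored form $s_n = c(n+1)+(d_1+d_2)\frac{n(n+1)}{2}+d\,\frac{n(n+1)(n-1)}{6}$ and the final polynomial both check out.
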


\begin{proof}
	Since $T$ is a Generalized Rascal Triangle,  $T_{r,k} = c+ kd_1 + rd_2 + rkd$, and by \lemref{L:constantr+k} we have $k=n-r$ for every entry, $T_{r,k}$, on the $\nth$ row. Thus
		\begin{align*}
			&s_n =\dsum_{r=0}^n (c+(n-r)d_1+rd_2 +r(n-r)d)\\
				&= \dsum_{r=0}^n c + \dsum_{r=0}^n d_1 - \dsum_{r=0}^n d_1r + \dsum_{r=0}^n  d_2r + \dsum_{r=0}^n n\,d\,r - \dsum_{r=0}^n d\,r^2\\
				&= c(n+1) +(n^2+n)d_1 - \left(\dfrac{n^2+n}2\right)d_1 +(n^2+n)d_2 + \left(\dfrac{n^3+n^2}2\right)d\\
				&\phantom{=} - \left(\dfrac{2n^3+3n^2 + n}6\right)d\\
				&= \dfrac{d}6 n^3 + \left(\dfrac{d_1+d_2}2\right)n^2 + \left(c+\dfrac{d_1+d_2}2 - \dfrac{d}6\right)n +c.
		\end{align*}
\end{proof}

For the next pattern, we need the following definition.

\begin{Def}
	Let $T$ denote a number triangle. For $n\ge 1$, an {\bf $\boldsymbol{n}$-diamond} in $T$ is the diamond whose sides are formed by the entries $T_{r,k}$ to $T_{r+n-1,k}$ on the $\kth$ minor diagonal, $T_{r+n,k}$ to $T_{r+n,k+n-1}$ on the $(r+n)^{\tth}$ major diagonal, $T_{r+n,k+n}$ to $T_{r+1,k+n}$ on the $(k+n)^{\tth}$ minor diagonal and $T_{r,k+n}$ to $T_{r,k+1}$ on the $\rth$ major diagonal.  See \figref{Fi:diamond}.  We call $T_{r,k}$ the top number of the diamond.
\end{Def}

\begin{figure}[H]
	\centering
	\includegraphics[scale=0.5]{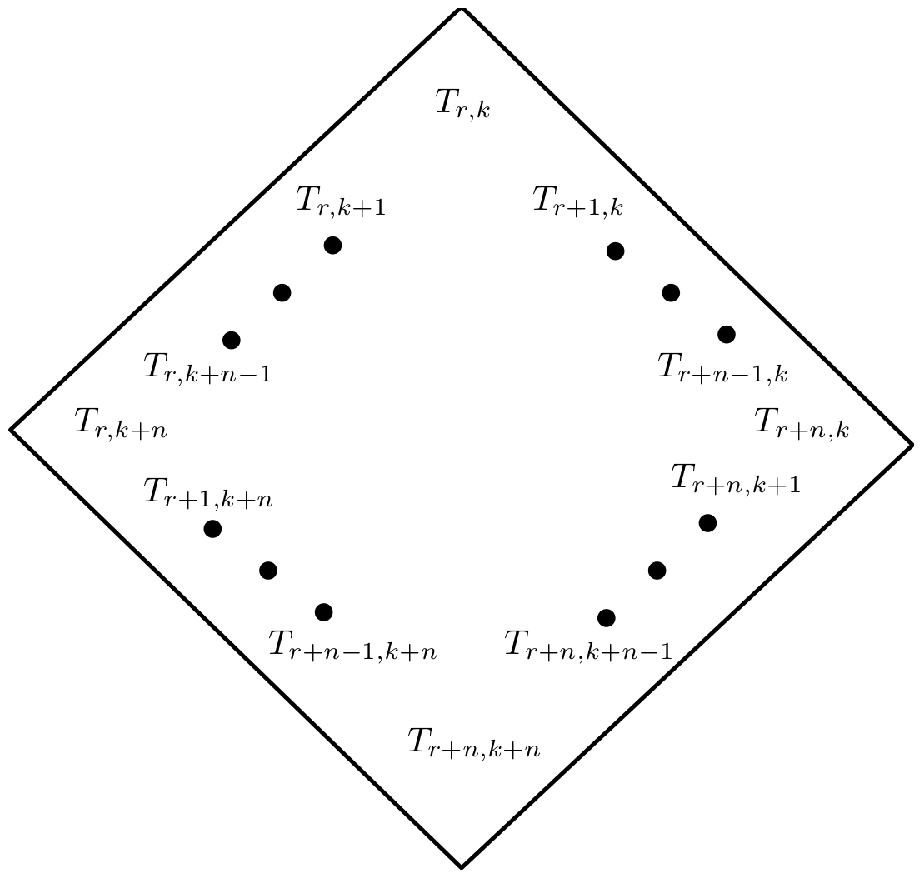}
	\caption{An $n$-diamond.}\label{Fi:diamond}
\end{figure}

The following property was named after John, who discovered the original pattern in the Rascal Triangle, $T(1,1,0,0)$.
		
\begin{Prop}[John's Odd/Even Diamond Patterns]\label{P:EvenOddPatterns}
	Let $c,d,d_1,d_2\in\bbZ$ and $T=\grt$ be the associated Generalized Rascal Triangle; then $T$ has the following diamond patterns: 
		\begin{enumerate}
			\item[i.] \textnormal{(Odd Diamond Pattern)} Let $D$ be a $(2n+1)$-diamond in $T$ whose top number is $T_{r,k}$.  The average of the $8n$ entries along the edge of the diamond is the number at the middle of the diamond, $T_{r+n,k+n}$.  That is,
				\begin{align}
					&\dfrac{\left(\dsum_{i=0}^{2n-1}T_{r+i,k} + \dsum_{i=0}^{2n-1} T_{r+2n,k+i} + \dsum_{i=0}^{2n-1}T_{r+2n-i,k+2n} + \dsum_{i=0}^{2n-1}T_{r,k+2n-i}\right)}{8n}\notag\\ 
					&\phantom{=}= T_{r+n,k+n}.
				\end{align}
			\item[ii.] \textnormal{(Even Diamond Pattern)} Let $D_1$ be a $2$-diamond whose top number is $T_{r,k}$, and for $n \le \min\{r,k\}$, let $D_n$ denote the $2n$-diamond whose top entry is $T_{r-n,k-n}$. Then the average of the entries along the edges of $D_n$ is equal to the average of the four entries along the edges of $D_1$.  That is,
				\begin{align}
					&\dfrac{\left(\dsum_{i=0}^{2n-2}T_{r-n+i,k-n} + \dsum_{i=0}^{2n-2} T_{r+n+1,k-n+i} + \dsum_{i=0}^{2n-2}T_{r+n+1-i,k+n+1} + \dsum_{i=0}^{2n-2}T_{r-n,k+n-i}\right)}{8n-4}\notag\\
					&= \dfrac{T_{r,k} + T_{r+1,k} + T_{r+n,k+n} +T_{r,k+1}}4.
				\end{align}
		\end{enumerate}
\end{Prop}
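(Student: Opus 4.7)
The plan is to exploit two structural observations.  First, each of the four sides of any diamond in $T$ lies along a single major or minor diagonal, so by \twoeqref{eq:majorarthseq}{eq:minorarthseq} the entries along that side form an arithmetic progression.  Second, both identities in the proposition are linear in $T$, so via the decomposition $T_{r,k} = c + kd_1 + rd_2 + rkd$ it suffices to verify each identity for the four summands $c$, $kd_1$, $rd_2$, $rkd$ separately.

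For part (i) I would first apply the arithmetic-series sum formula $\sum_{i=0}^{m-1} a_i = \frac{m}{2}(a_0 + a_{m-1})$ with $m=2n$ to each of the four inner sums.  The total edge sum then collapses to
\begin{align*}
n\bigl[&(T_{r,k}+T_{r+2n-1,k}) + (T_{r+2n,k}+T_{r+2n,k+2n-1}) \\
&+ (T_{r+2n,k+2n}+T_{r+1,k+2n}) + (T_{r,k+2n}+T_{r,k+1})\bigr],
\end{align*}
so the claim reduces to showing that this bracketed sum of eight endpoint values equals $8\,T_{r+n,k+n}$.  After invoking linearity, the substantive content is the $rkd$ check, which amounts to the identity $\sum_{i=1}^{8}(r+a_i)(k+b_i)=8(r+n)(k+n)$ for the eight index-offset pairs
\[
(a_i,b_i)\in\{(0,0),(2n-1,0),(2n,0),(2n,2n-1),(2n,2n),(1,2n),(0,2n),(0,1)\}.
\]
Expanding reduces this to the three scalar identities $\sum a_i = 8n$, $\sum b_i = 8n$, and $\sum a_ib_i = 8n^2$, each verified by direct summation.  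Dividing by $8n$ then gives the claimed average.

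For part (ii) the strategy is identical: use the arithmetic-series sum formula to collapse the numerator of the LHS into a sum of eight endpoint values on the four sides of $D_n$ (now with $2n-1$ terms per side, which accounts for the denominator $8n-4$), and then verify the resulting identity against the prescribed four-entry RHS average by summand-wise linearity.  The constant piece is trivial, the linear pieces reduce to matching sums of $r$- and $k$-coordinates among the endpoints, and the $rkd$ piece reduces once again to matching $\sum a_i$, $\sum b_i$, and $\sum a_ib_i$ on the two sides.

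The main obstacle is purely clerical: tracking the eight endpoint index pairs in part (i) (and their analogues in part (ii)) during the $rkd$ verification.  Nothing conceptually new is needed; once the sides are recognized as arithmetic sequences and linearity in $T$ is invoked, both identities boil down to a handful of elementary sum-of-coefficient checks on a finite list of index offsets.
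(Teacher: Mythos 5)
Your proof of part (i) is correct, but it takes a genuinely different route from the paper. The paper never sums the sides in closed form: it pairs the $i^{\text{th}}$ entry of each side with the $(2n-i)^{\text{th}}$ entry of the opposite side and shows $T_{r+i,k}+T_{r+2n-i,k+2n}=2T_{r+n,k+n}+2n^2d-2ndi$ and $T_{r+2n,k+i}+T_{r,k+2n-i}=2T_{r+n,k+n}+2ndi-2n^2d$, so the error terms cancel within each $i$ and every grouped term equals $4T_{r+n,k+n}$. You instead observe that each side lies on a single major or minor diagonal, hence is an arithmetic progression, apply $\sum_{i=0}^{m-1}a_i=\frac{m}{2}(a_0+a_{m-1})$ with $m=2n$, and reduce everything to eight endpoints checked by linearity in $c,d_1,d_2,d$; your coefficient identities $\sum a_i=\sum b_i=8n$ and $\sum a_ib_i=8n^2$ are all correct (e.g.\ $\sum a_ib_i=2n(2n-1)+4n^2+2n=8n^2$). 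Your version trades the paper's pairing trick for the standard series formula and is, if anything, cleaner and shorter.

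Part (ii), however, contains a genuine gap: you assert the checks are ``identical'' without carrying them out, and on the statement as printed they fail. The four sums, with $i$ running only to $2n-2$ but with extreme indices $r+n+1$ and $k+n+1$, do not trace the boundary of any single diamond (a diamond with corners separated by $2n+1$ has $8n+4$ boundary entries, not $8n-4$), and $T_{r+n,k+n}$ on the right side is not the fourth vertex of the $2$-diamond $D_1$ (it should be $T_{r+1,k+1}$). Concretely, take $n=1$ and $T_{r,k}=rk$ (so $c=d_1=d_2=0$, $d=1$): the printed left side is
\begin{equation*}
\frac{(r-1)(k-1)+(r+2)(k-1)+(r+2)(k+2)+(r-1)(k+1)}{4}=\frac{4rk+r+2k+2}{4},
\end{equation*}
while the right side is $\frac{4rk+2r+2k+1}{4}$; these differ, so your endpoint verification, had you executed it, would have refuted rather than proved the displayed identity. (The paper's own proof of part (ii) harbors matching index slips --- note the stray $i$ surviving into its final ``constant'' summand $T_{r-n+i,k+n-1}$.) The statement must first be repaired: the $2n$-diamond should be centered at the center of $D_1$, i.e.\ have top entry $T_{r-n+1,k-n+1}$ and far corners at $(r+n,k+n)$, with $T_{r+1,k+1}$ replacing $T_{r+n,k+n}$ on the right. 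With those corrections your method does close cleanly: each side has $2n-1$ terms, the numerator collapses to $\frac{2n-1}{2}$ times the sum of the eight endpoints, dividing by $8n-4=4(2n-1)$ leaves one eighth of that endpoint sum, and the endpoint offsets satisfy $\sum a_i=\sum b_i=4$ and $\sum a_ib_i=2$, exactly twice the corresponding sums over the four entries of $D_1$. So part (i) of your proposal stands as a valid alternative proof, but part (ii) is incomplete as written: the ``routine'' checks you deferred are precisely where the mis-stated indices would have surfaced, and a complete write-up must include the corrected indexing before the linearity argument applies.
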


\begin{Ex}
	In the $3$-diamond in \figref{Fi:OddDiamondEx}, the average of the 8 numbers along the edge of the $3$-diamond is equal to $50$.
		\begin{figure}[H]
			\centering
			\includegraphics[scale=0.5]{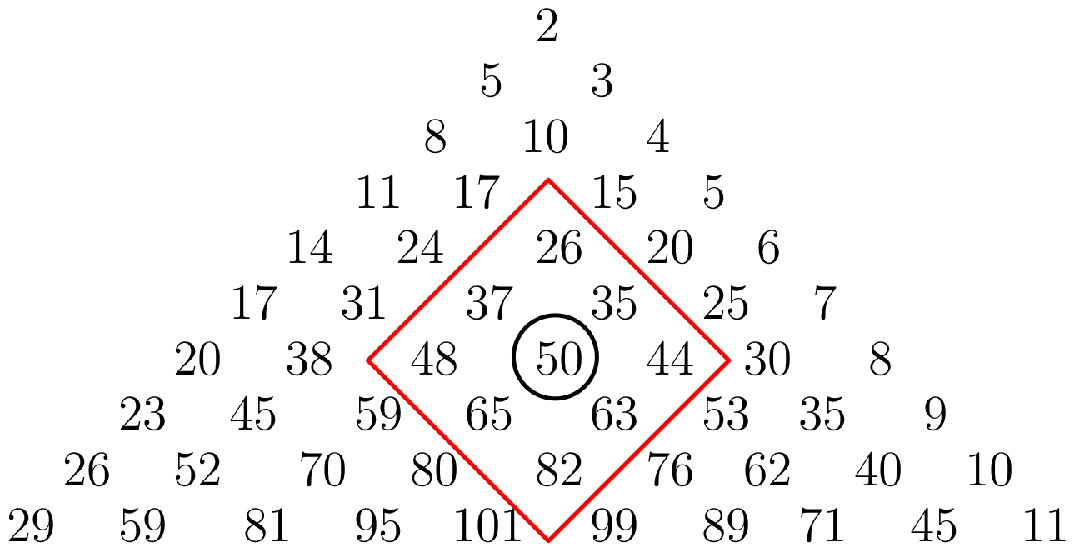}
			\caption{John's Odd Diamond Pattern}\label{Fi:OddDiamondEx}
		\end{figure}
		
	In the $4$-diamond in \figref{Fi:EvenDiamondEx}, the average of the $12$ numbers along the edge of the $4$-diamond is equal to the average of the $4$ numbers in the $2$-diamond in the center.
		\begin{figure}[H]
			\centering
			\includegraphics[scale=0.5]{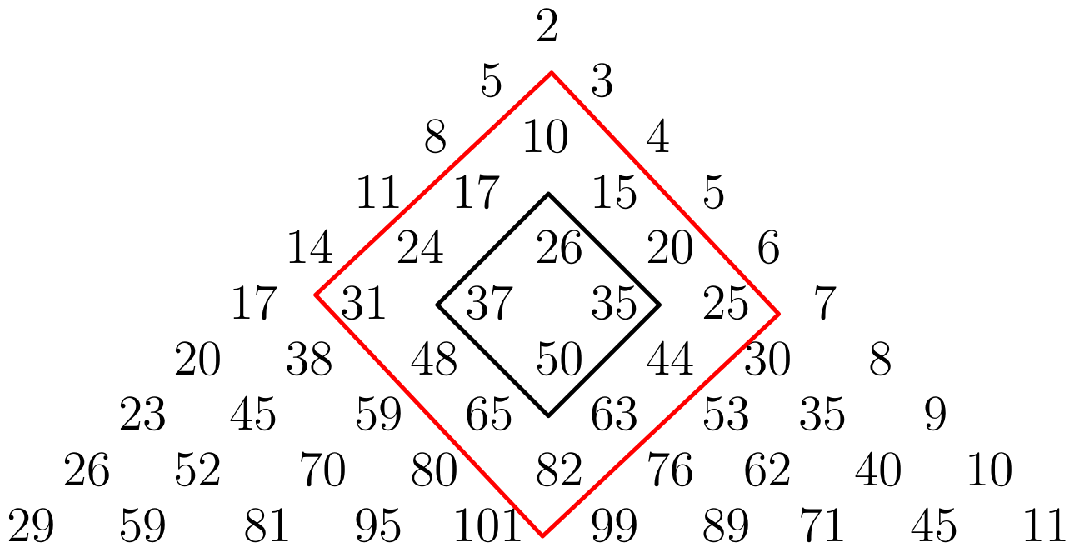}
			\caption{John's Even Diamond Pattern}\label{Fi:EvenDiamondEx}
		\end{figure}
		
\end{Ex}

\begin{proof}[Proof of \propref{P:EvenOddPatterns}]
	For the Odd Diamond Pattern, we can regroup the terms in the numerator as follows
		\begin{equation*}
			\dsum_{i=0}^{2n-1}\bigg[\Big(T_{r+i,k}+T_{r+2n-i,k+2n}\big) +\big(T_{r+2n,k+i} + T_{r,k+2n-i}\Big)\bigg]
		\end{equation*}
		Since $T$ is a Generalized Rascal Triangle we have that $T_{r,k} = c+ kd_1+rd_2+rkd$; so    
			\begin{align}
				&T_{r+i,k} + T_{r+2n-i,k+2n} = (c+kd_1 +(r+i)(d_2+kd)) + (c+ (k+2n)d_1\notag\\ 
				&\phantom{=} + (r+2n-i)(d_2+(k+2n)d))\notag\\
				&= c + kd_1 + (r+i)(d_2+kd) + c + kd_1 + 2nd_1 + (r-i)(d_2+kd)\notag\\ 
				&\phantom{=} + (r-i)2nd + 2n(d_2+kd) + 4n^2d\notag\\
				&= 2c + 2kd_1 + 2rd_2+rkd + id_2 + ikd + 2nd_1 + rkd -id_2 -ikd + 2nrd\notag\\ 
				&\phantom{=} -2ndi + 2nd_2 + 2nkd + 4n^2d\notag\\
				&= 2c + (2kd_1 +2nd_1) + (2rd_2 + 2nd_2) + (2rkd + 2rnd + 2knd + 2n^2d) + 2n^2d\notag\\ 
				&\phantom{=} - 2ndi\notag\\
				&= 2(c + (k+n)d_1 + (r+n)d_2 + (r+n)(k+n)d) + 2n^2d - 2ndi\notag\\
				&=2T_{r+n,k+n} + 2n^2d - 2ndi;\label{diamondsum1}\\
				\intertext{and}
				&T_{r+2n,k+i} + T_{r,k+2n-i}  = (c+(r+2n)d_2 + (k+i)(d_1 +(r+2n)d))\notag\\
				&\phantom{=} + (c+rd_2 + (k+2n-i)(d_1 + rd))\notag\\
				&= c+ rd_2+2nd_2 + (k+i)(d_1+rd) + (k+i)2nd + c + rd_2 + (k-i)(d_1+rd)\notag\\ 
				&\phantom{=} + 2n(d_1+rd)\notag\\
				&= c + rd_2 + 2nd_2 + kd_1+rkd + id_1 + ird + 2knd + 2ndi + c + kd_1 + rd_2+rkd\notag\\ 
				&\phantom{=} - id_1 -ird + 2nd_1 + 2nrd + 2n^2d - 2n^2d\notag\\
				&= 2c+ (2rd_2+2nd_2) + (2kd_1+2nd_1) + (2rkd + 2rnd + 2knd + 2n^2d) + 2ndi\notag\\ 
				&\phantom{=} - 2n^2d\notag\\
				&= 2(c+ (k+n)d_1 + (r+n)d_2 + (r+n)(k+n)d) + 2ndi - 2n^2d\notag\\
				&=2T_{r+n,k+n} + 2dni - 2n^2d\label{diamondsum2}\\ 
				\intertext{Combining \twoeqref{diamondsum1}{diamondsum2} we get}
				&(T_{r+i,k} + T_{r+2n-i,k+2n}) + (T_{r+2n,k+i}+ T_{r,k+2n-i})\notag\\ 
				&= (2T_{r+n,k+n} + 2n^2d - 2ndi ) + (2T_{r+n,k+n} + 2ndi - 2n^2d)\notag\\ 
				&=4T_{r+n,k+n}
			\end{align}
		Therefore
			\begin{align}
				&\dfrac{\left(\dsum_{i=0}^{2n-1}T_{r+i,k} + \dsum_{i=0}^{2n-1} T_{r+2n,k+i} + \dsum_{i=0}^{2n-1}T_{r+2n-i,k+2n} + \dsum_{i=0}^{2n-1}T_{r,k+2n-i}\right)}{8n}\notag\\ 
				&=\dfrac{\dsum_{i=0}^{2n-1}\Big[(T_{r+i,k}+T_{r+2n-i,k+2n}) + (T_{r+2n,k+i} + T_{r,k+2n-i})\Big]}{8n}\notag\\
				&=\dfrac{\dsum_{i=0}^{2n-1}4T_{r+n,k+n}}{8n}=\dfrac{8n(T_{r+n,k+n})}{8n}= T_{r+n,k+n}.
			\end{align}

		For the Even Diamond Pattern, we can regroup the terms in the sum of the edges of a $2n$-diamond as follows:
			\begin{equation*}
				\dsum_{i=0}^{2n-2}\Big(T_{r-n+i,k-n}+T_{r+n+1,k-n+i} + T_{r+n+1-i,k+n+1} + T_{r-n,k+n+1-i}\Big)
			\end{equation*}
	
		Since $T$ is a Generalized Rascal Triangle we have that $T_{r,k} = c+ kd_1+rd_2+rkd$.  Thus,  
			\begin{align}
				T_{r-n+i,k-n} &= c + (k-n)d_1 + (r-n+i)d_2 + (r-n+i)(k-n)d\notag\\
							&= c + (k-n)d_1 + (r-n)d_2 + id_2 + (r-n)(k-n)d + i(k-n)d\notag\\
							&= T_{r-n,k-n} + id_2 + i(k-n)d\label{diamondedgesum1}\\
				T_{r+n+1,k-n+i} &= c + (k-n+i)d_1 + (r+n+1)d_2 + (r+n+1)(k-n+i)d\notag\\
							&= c + (k-n)d_1 + id_1 + (r+n+1)d_2 + (r+n+1)(k-n)d\notag\\ 
							&\phantom{=} + i(r+n)d+id\notag\\
							&= T_{r+n+1,k-n} + id_1 + i(r+n)d + id\label{diamondedgesum2}\\
				T_{r+n+1-i,k+n+1} &= c + (k+n+1)d_1 + (r-n+1-i)d_2 + (r+n+1-i)(k+n+1)d\notag\\
							&= c + (k+n+1)d_1 + (r+n+1)d_2 - id_2 + (r+n+1)(k+n+1)d\notag\\ 
							&\phantom{=} - i(k+n)d - id\notag\\
							&= T_{r+n+1,k+n+1} - id_2 -  i(k+n)d - id\label{diamondedgesum3}\\
				T_{r-n,k-n-i} &= c + (k+n+1-i)d_1 + (r-n)d_2 + (r-n)(k+n+1-i)d\notag\\
							&= c + (k+n+1)d_1 - id_1 + (r-n)d_2 + (r-n)(k+n+1)d - i(r-n)d\notag\\
							&= T_{r-n,k+n+1} - id_1 - i(r-n)d\label{diamondedgesum4}
			\end{align}
	
		Combining \conseceqref{diamondedgesum1}{diamondedgesum4} we get
			\begin{align}
				T_{r-n+i,k-n} &+ T_{r+n+1,k-n+i} + T_{r+n+1-i,k+n+1} + T_{r-n,k+n+1-i}\notag\\ 
				&= T_{r-n,k-n} + id_2 + i(k-n)d + T_{r+n+1,k-n} + id_1 + i(r+n)d + id\notag\\ 
				&\phantom{=} + T_{r+n+1,k+n+1} - id_2 -  i(k+n)d - id+ T_{r-n,k+n+1} - id_1 - i(r-n)d\notag\\ 
				&=T_{r-n,k-n} + T_{r+n+1,k-n} + T_{r+n+1,k+n+1} + T_{r-n+i,k+n-1} 
			\end{align}
		Therefore
			\begin{align*}
				&\dfrac{\dsum_{i=0}^{2n-2}(T_{r-n+i,k-n} +T_{r+n+1,k-n+i} + T_{r+n+1-i,k+n+1} + T_{r-n,k+n+1-i})}{8n-4}\\ 
				&= \dfrac{\dsum_{i=0}^{2n-2}(T_{r-n,k-n} + T_{r+n+1,k-n} + T_{r+n+1,k+n+1} + T_{r-n+i,k+n-1})}{8n-4}\\ 
				&= \dfrac{(2n-1)(T_{r-n,k-n} + T_{r+n+1,k-n} + T_{r+n+1,k+n+1} + T_{r-n+i,k+n-1})}{8n-4}\\
				&=  \dfrac{(T_{r-n,k-n} + T_{r+n+1,k-n} + T_{r+n+1,k+n+1} + T_{r-n+i,k+n-1})}4.
			\end{align*}
\end{proof}

The following properties were named after the student Ashley, who discovered the original version for the Rascal Triangle, $T(1,1,0,0)$.

\begin{Prop}[Ashley's Rule]\label{P:Ashley}
	Let $c,d,d_1,d_2\in\bbZ$ and $T=\grt$ be the associated Generalized Rascal Triangle; then 
			\begin{equation*}
				T_{r,k} = T_{r-1,k}+T_{r,k-1} - T_{r-2,k-1} + ((2-k)d - d_2).
			\end{equation*}
		for $r\ge 2, k\ge 1$
\end{Prop}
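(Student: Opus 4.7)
The plan is to prove Ashley's Rule by direct substitution using the closed form $T_{r,k} = c + kd_1 + rd_2 + rkd$ that comes from the definition of a Generalized Rascal Triangle. There is no induction needed and no appeal to the addition or multiplication rules: once we have the polynomial formula in hand, the identity is a purely algebraic bookkeeping exercise.

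Concretely, I would first write down the three ingredients on the right-hand side using \Eqref{eq:GenRasEq}:
\begin{align*}
T_{r-1,k} &= c + kd_1 + (r-1)d_2 + (r-1)k\,d,\\
T_{r,k-1} &= c + (k-1)d_1 + rd_2 + r(k-1)d,\\
T_{r-2,k-1} &= c + (k-1)d_1 + (r-2)d_2 + (r-2)(k-1)d.
\end{align*}
Then I would form the combination $T_{r-1,k} + T_{r,k-1} - T_{r-2,k-1}$ and collect the coefficients of $c$, $d_1$, $d_2$, and $d$ separately. The $c$ and $d_1$ coefficients collapse immediately to $c$ and $kd_1$; the $d_2$ coefficient telescopes to $(r+1)d_2$; and the only slightly fiddly piece is the $d$-coefficient, which comes to $(r-1)k + r(k-1) - (r-2)(k-1) = rk + k - 2$.

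Putting these together yields
\begin{equation*}
T_{r-1,k} + T_{r,k-1} - T_{r-2,k-1} = c + kd_1 + (r+1)d_2 + (rk+k-2)d,
\end{equation*}
and subtracting this from $T_{r,k} = c + kd_1 + rd_2 + rkd$ gives exactly $-d_2 + (2-k)d = (2-k)d - d_2$. Rearranging recovers Ashley's Rule, valid for $r\ge 2$ and $k\ge 1$ (the hypotheses on $r,k$ are only used to guarantee that the indices $r-1,r-2,k-1$ are nonnegative, so that the entries $T_{r-1,k}$, $T_{r,k-1}$, and $T_{r-2,k-1}$ are actually defined).

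The only step that requires any care at all is the accounting for the $d$-coefficient, since it is the one place where the computation does not simplify by a single telescoping cancellation. Everything else is inspection, so I do not anticipate any genuine obstacle.
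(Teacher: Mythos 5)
Your proposal is correct --- the coefficient bookkeeping checks out (in particular the $d$-coefficient $(r-1)k + r(k-1) - (r-2)(k-1) = rk+k-2$, which is indeed the only nontrivial step) --- but it takes a genuinely different route from the paper's. You expand all four entries via the closed form \Eqref{eq:GenRasEq} and compare coefficients of $c$, $d_1$, $d_2$ and $d$ separately. The paper instead argues structurally: it notes that the minor diagonal through $T_{r-2,k-1}$ is an arithmetic progression with common difference $(k-1)d + d_2$, so that $T_{r-2,k-1} + (k-1)d + d_2 = T_{r-1,k-1}$, and it rewrites the correction term as $(2-k)d - d_2 = d - \big((k-1)d + d_2\big)$; with these two observations Ashley's Rule collapses in three lines to the already-established Rascal-like addition rule $T_{r,k} = T_{r,k-1} + T_{r-1,k} - T_{r-1,k-1} + d$ of \propref{prop:GRTAdd}. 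Your version is more self-contained, needing only the defining polynomial and no earlier propositions, and it verifies the identity uniformly by inspection; the paper's version is shorter and more explanatory, since it shows exactly why the ``diagonal factor'' $(2-k)d - d_2$ appears --- it is the additive constant $d$ minus the cost of sliding one step up the minor diagonal from $T_{r-1,k-1}$ to $T_{r-2,k-1}$. Both arguments use the hypotheses $r\ge 2$, $k\ge 1$ only to ensure the referenced entries exist, as you correctly observe.
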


\begin{Ex}
	In \figref{Fi:Ashley1} 
		$$T_{r-1,k}+T_{r,k-1} - T_{r-2,k-1} + ((2-k)d - d_2) = 82 + 76 - 50 -9 = 99 = T_{r,k}.$$
		\begin{figure}[H]
			\centering
			\includegraphics[scale=0.5]{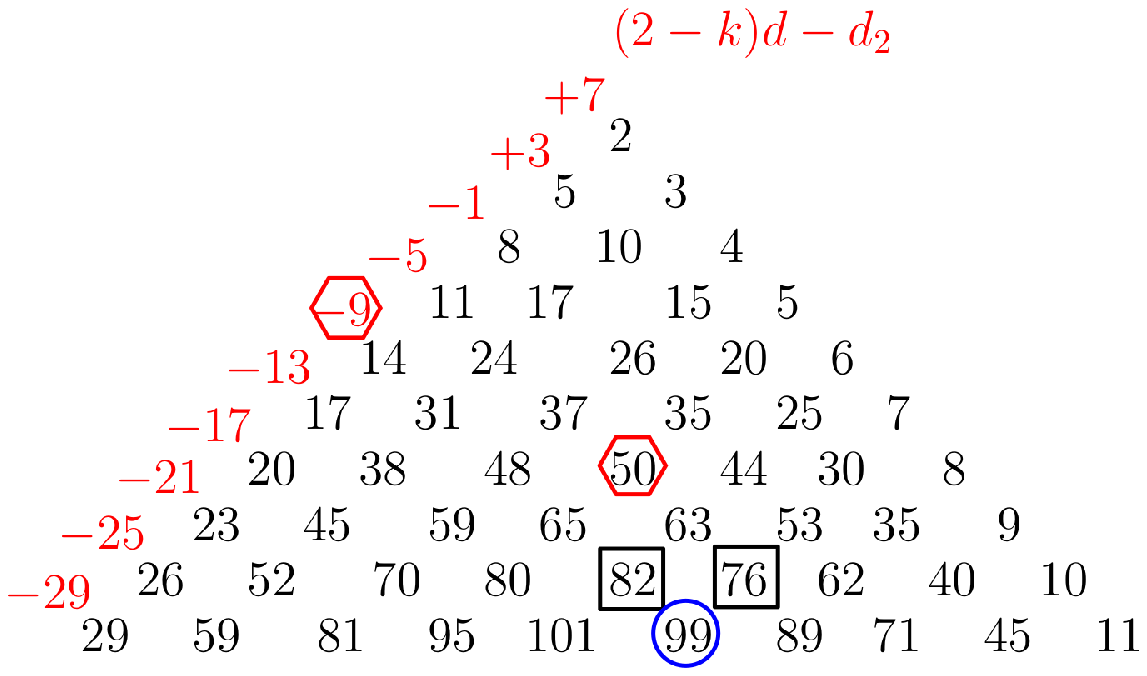}
			\caption{Ashley's Rule}\label{Fi:Ashley1}
		\end{figure}	
\end{Ex}

\begin{proof}[Proof of \propref{P:Ashley}]
	Recall that on the $\kth$ minor diagonal, the entries form the arithmetic progression $c + kd_1 + r(kd + d_2)$ with common difference $kd+d_2$, hence $T_{r-1,k} + kd + d_2 = T_{r,k}$ for $r\ge 1,k \ge 0$. Thus
		\begin{align*}
			&T_{r,k-1}+T_{r-1,k} - T_{r-2,k-1} + ((2-k)d - d_2)\\ 
			&= T_{r,k-1}+T_{r-1,k} - T_{r-2,k-1} - (k-1)d - d_2 + d \\
			&= T_{r,k-1}+T_{r-1,k} - (T_{r-2,k-1} + (k-1)d + d_2) + d\\
			&= T_{r,k-1}+T_{r-1,k} - T_{r-1,k-1} + d\\
			&= T_{r,k}
		\end{align*}
\end{proof}

My colleague, Julian Fleron and I subsequently discovered three ways of modifying Ashley's Rule so that the "diagonal factor", $(2-k)d - d_2$ was not needed.

\begin{Prop}[Modified Ashley Rules]\label{P:ModAshley}
	Let $c,d,d_1,d_2\in\bbZ$ and $T=\grt$ be the associated Generalized Rascal Triangle; then 
		\begin{align*}
			T_{r,k} &= T_{r-1,k} + T_{r,k-1} - T_{r-2,k-1} - T_{r-2,k-2}  +T_{r-3,k-2}\\
			\intertext{for $r\ge 3, k\ge 2$;}
			T_{r,k} &= T_{r,k-1} + T_{r-1,k-1} - T_{r-2,k-2} - T_{r-2,k-3}  +T_{r-3,k-3}\\
			\intertext{for $r, k\ge 3$; and}
			T_{r,k} &= T_{r-1,k} + T_{r-1,k-1} - T_{r-2,k-2} - T_{r-3,k-2}  +T_{r-3,k-3}\\
		\end{align*}
		for $r, k\ge 3$.
\end{Prop}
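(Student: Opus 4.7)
The plan is to prove each of the three identities by direct substitution of the closed form $T_{r,k} = c + kd_1 + rd_2 + rkd$ from \Eqref{eq:GenRasEq}. Each right-hand side is a $\bbZ$-linear combination of five entries $T_{a,b}$ with signs $+,+,-,-,+$; after expanding every entry via the formula, it suffices to verify that the resulting coefficients of $c$, $d_1$, $d_2$, and $d$ equal $1$, $k$, $r$, and $rk$ respectively. Because the five entry coefficients are $\pm 1$ and sum to $+1$, the $c$-coefficient match is automatic; the $d_1$- and $d_2$-coefficient matches reduce to routine linear arithmetic in $k$ and $r$; only the $d$-coefficient, which involves cross products of the form $(r-i)(k-j)$, requires any real bookkeeping, and after expansion these terms collapse to $rk$ once the linear pieces cancel.

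A more conceptual motivation for the first identity is to eliminate the ``diagonal factor'' in Ashley's Rule (\propref{P:Ashley}). Because the $(k-2)^\tth$ minor diagonal is arithmetic with common difference $d_2 + (k-2)d$, one has
\[
(2-k)d - d_2 \;=\; T_{r-3,k-2} - T_{r-2,k-2},
\]
and substituting this equation into the statement of \propref{P:Ashley} produces the first Modified Ashley Rule verbatim, with the index hypothesis $r\ge 3,\ k\ge 2$ being exactly what is needed for $T_{r-3,k-2}$ to be defined. The second and third identities arise analogously by first shifting Ashley's Rule to a neighboring base entry (say $T_{r,k-1}$ or $T_{r-1,k-1}$) and then absorbing the resulting constant into a telescoping difference along another diagonal; however, the direct-substitution route is uniformly cleaner.

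The main and only obstacle is arithmetic hygiene when collecting the $d$-coefficient across five expanded entries, so in the final write-up I would organize each of the three cases as a single aligned display listing the five expanded entries on separate lines, then collect the coefficients of $c$, $d_1$, $d_2$, $d$ in turn and verify each against its target value $1$, $k$, $r$, $rk$. No induction or auxiliary lemma beyond the defining formula \Eqref{eq:GenRasEq} is required.
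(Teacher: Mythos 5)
Your proposal is correct and is essentially the paper's own argument: the paper proves Modifications 1 and 2 by exactly this direct substitution of \Eqref{eq:GenRasEq} into the five-term $\pm 1$ combination and collection of the coefficients of $c$, $d_1$, $d_2$, $d$, differing only in that it dispatches Modification 3 by rewriting Modification 2 via \lemref{L:constantdiffs} rather than performing a third expansion. Your side observation that Modification 1 follows verbatim from \propref{P:Ashley} by writing $(2-k)d - d_2 = T_{r-3,k-2} - T_{r-2,k-2}$ (the telescoping difference along the $(k-2)^\tth$ minor diagonal, whose common difference is $d_2+(k-2)d$) is also correct, with the hypothesis $r\ge 3$, $k\ge 2$ arising exactly as you say, and is in fact a slicker derivation of that case than the paper's direct computation.
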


\begin{Ex}
	In \figref{Fi:Ashley2} for the first modification, $T_{r,k} = T_{r-1,k} + T_{r,k-1} - T_{r-2,k-1} - T_{r-2,k-2}  +T_{r-3,k-2}$,
		$$T_{r-1,k}+T_{r,k-1} - T_{r-2,k-1}  - T_{r-2,k-2}  +T_{r-3,k-2} = 82 + 76 - 50 - 35 + 26 = 99 = T_{r,k}.$$
		\begin{figure}[H]
			\centering
			\includegraphics[scale=0.5]{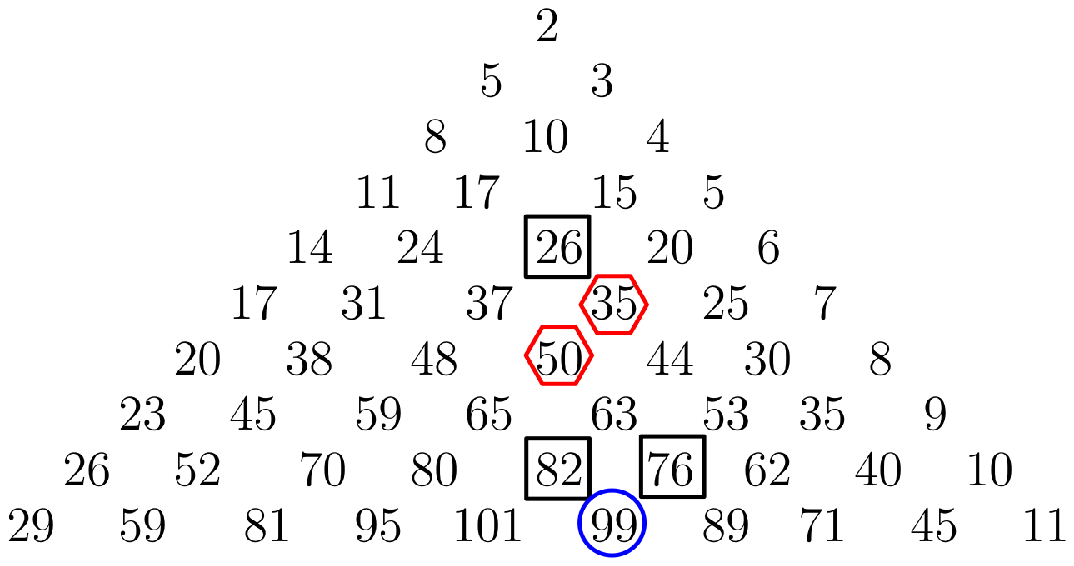}
			\caption{Modification 1 of Ashley's Rule}\label{Fi:Ashley2}
		\end{figure}	

	In \figref{Fi:Ashley3} for the second modification, $T_{r,k} = T_{r,k-1} + T_{r-1,k-1} - T_{r-2,k-2} - T_{r-2,k-3}  +T_{r-3,k-3}$,
		$$T_{r,k-1} + T_{r-1,k-1} - T_{r-2,k-2} - T_{r-2,k-3}  +T_{r-3,k-3} = 76 + 63 - 35 - 20 + 15 = 99 = T_{r,k}.$$
		\begin{figure}[H]
			\centering
			\includegraphics[scale=0.5]{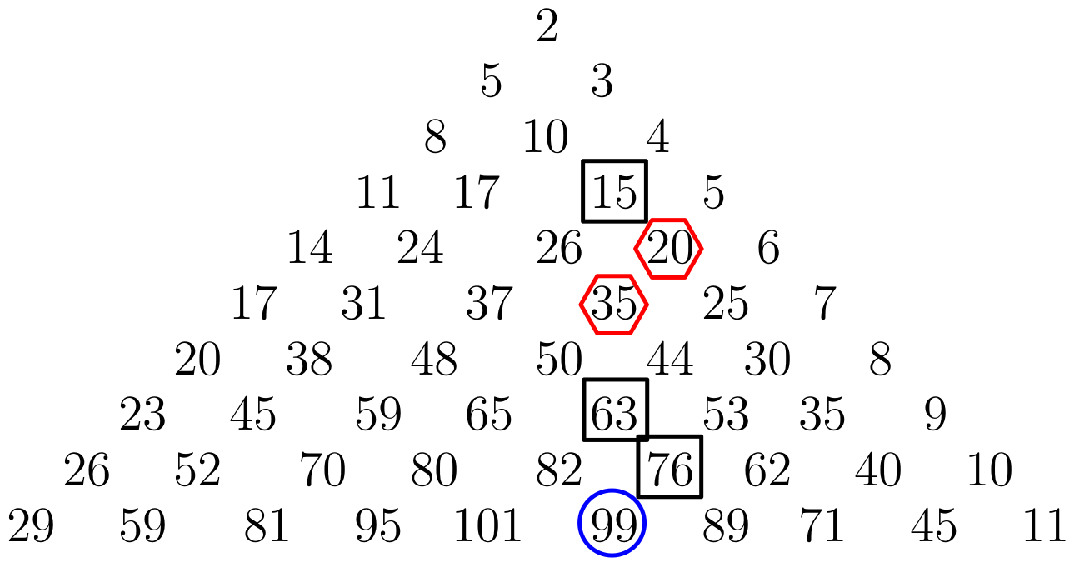}
			\caption{Modification 2 of Ashley's Rule}\label{Fi:Ashley3}
		\end{figure}	

	In \figref{Fi:Ashley4} for the third modification, $T_{r,k} = T_{r-1,k} + T_{r-1,k-1} - T_{r-2,k-2} - T_{r-3,k-2}  +T_{r-3,k-3}$,
		$$T_{r-1,k} + T_{r-1,k-1} - T_{r-2,k-2} - T_{r-3,k-2}  +T_{r-3,k-3} = 82 + 63 - 35 - 26 + 15 = 99 = T_{r,k}.$$
		\begin{figure}[H]
			\centering
			\includegraphics[scale=0.5]{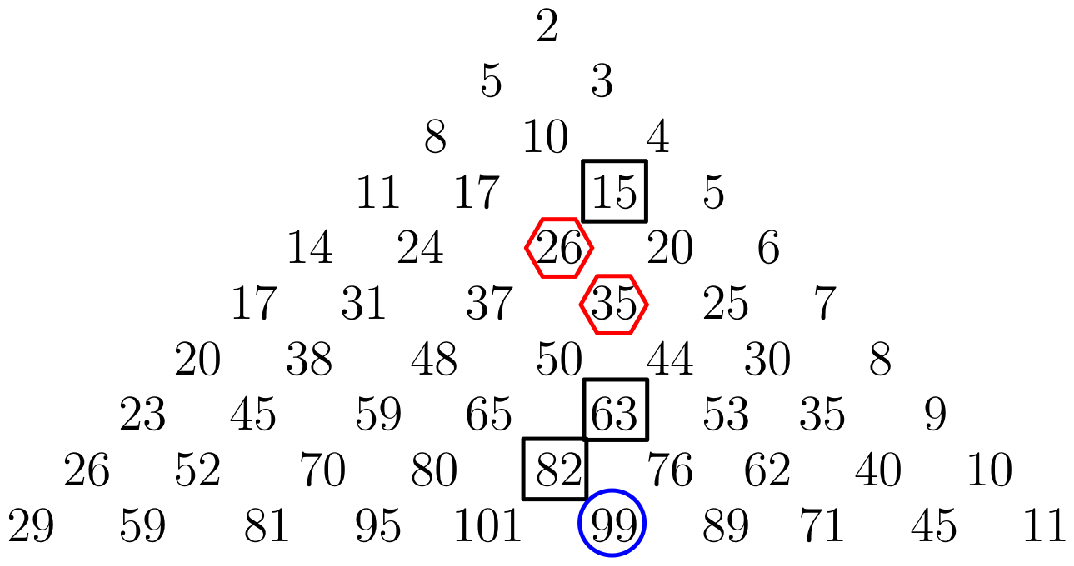}
			\caption{Modification 3 of Ashley's Rule}\label{Fi:Ashley4}
		\end{figure}	
\end{Ex}

\begin{proof}[Proof of \propref{P:ModAshley}] For Modification 1: 
	\begin{align*}
		T_{r-1,k} &+ T_{r,k-1} - T_{r-2,k-1} - T_{r-2,k-2}  + T_{r-3,k-2}\\ 
				&= (c+ (r-1)d_2+kd_1 + (r-1)kd)\\ 
				&\phantom{=}+ (c +(k-1)d_1 + rd_2 + r(k-1)d)\\  
				&\phantom{=} -  (c +(k-1)d_1 + (r-2)d_2+ (r-2)(k-1)d)\\ 
				&\phantom{=} -  (c+ (k-2)d_1+(r-2)d_2 + (r-2)(k-2)d)\\  
				&\phantom{=}+  (c+ (k-2)d_1+(r-3)d_2 + (r-3)(k-2)d)\\
				&=\phantom{+} c + kd_1 \phantom{- 2d_1} + rd_2- \phantom{2}d_2 + rkd - \phantom{2}kd\\
				&\phantom{=} + c + kd_1 - \phantom{2}d_1 + rd_2 \phantom{- 2d_2} + rkd  \phantom{-2kd} - \phantom{2}rd \\
				&\phantom{=} - c - kd_1 + \phantom{2}d_1 - rd_2 + 2d_2 - rkd  + 2kd + rd - 2d\\
				&\phantom{=}  - c - kd_1 + 2d_1 - rd_2 + 2d_2 - rkd + 2kd + 2rd - 4d\\ 
				&\phantom{=} + c + kd_1 - 2d_1 + rd_2 - 3d_2 + rkd - 3kd - 2rd + 6d\\
				&= c + kd_1 + rd_2 + rkd = T_{r,k}
				\intertext{For Modification 2:}
				T_{r,k-1} &+ T_{r-1,k-1} - T_{r-2,k-2} - T_{r-2,k-3}  + T_{r-3,k-3}\\ 
				&= (c + (k-1)d_1+ rd_2 + r(k-1)d)\\ 
				&\phantom{=}+ (c+(k-1)d_1 + (r-1)d_2 + (r-1)(k-1)d)\\ 
				&\phantom{=} -  (c + (k-2)d_1 + (r-2)d_2 + (r-2)(k-2)d)\\ 
				&\phantom{=} -  (c + (k-3)d_1 + (r-2)d_2 + (r-2)(k-3)d)\\  
				&\phantom{=} +  (c +(k-3)d_1 + (r-3)d_2 + (r-3)(k-3)d)\\
				&=\phantom{+} c + kd_1 - d_1 + rd_2\phantom{- 2d_2} + rkd - rd\\
				&\phantom{=} + c + kd_1 - d_1 + rd_2 - d_2 + rkd  - rd - kd + d\\
				&\phantom{=} - c - kd_1 + 2d_1 - rd_2 + 2d_2 - rkd  + 2rd + 2kd - 4d\\
				&\phantom{=}  - c - kd_1 + 3d_1 - rd_2 + 2d_2 - rkd + 3rd + 2kd - 6d\\ 
				&\phantom{=} + c + kd_1 - 3d_1 + rd_2 - 3d_2 + rkd - 3rd - 3kd + 9d\\
				&= c + kd_1 + rd_2 + rkd = T_{r,k}
		\end{align*}
	The proof for Modification 3 follows immediately from the proof of Modification 2 and \lemref{L:constantdiffs} by replacing $T_{r,k-1}$ by $T_{r-1,k} + d_2-d_1 + (k-r)d$ and $T_{r-2,k-3}$ by $T_{r-3,k-2} +  d_2-d_1 + (k-r)d$.
\end{proof}

\begin{Lma}[Constant Difference Along Adjacent Columns]\label{L:constantdiffs}
	Let  $c,d,d_1,d_2\in\bbZ$ and $T=\grt$ be the associated Generalized Rascal Triangle; then 
		$$T_{r,k}-T_{r-1,k+1} = T_{r-1,k-1} - T_{r-2,k}.$$
\end{Lma}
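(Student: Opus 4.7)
The plan is to prove the identity by direct substitution, since the defining formula for a Generalized Rascal Triangle, $T_{r,k} = c + kd_1 + rd_2 + rkd$, gives a closed-form expression for every entry. There is no need for induction or for invoking the Rascal-like addition or multiplication rules; the statement is essentially a polynomial identity in the four parameters $c, d, d_1, d_2$.

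First I would expand the left-hand side $T_{r,k} - T_{r-1,k+1}$ using the explicit formula. The constants $c$ cancel immediately, and the $d_1$ and $d_2$ contributions each collapse to a single term ($-d_1$ and $+d_2$ respectively). The mixed term $rkd - (r-1)(k+1)d$ simplifies after expanding $(r-1)(k+1) = rk + r - k - 1$, yielding $(k-r+1)d$. So the left-hand side equals
\[
  d_2 - d_1 + (k-r+1)d.
\]

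Next I would expand the right-hand side $T_{r-1,k-1} - T_{r-2,k}$ in the same way. Again $c$ cancels, the $d_1$ and $d_2$ parts reduce to $-d_1$ and $+d_2$, and the mixed term $(r-1)(k-1)d - (r-2)kd$ simplifies, using $(r-1)(k-1) - (r-2)k = -r + k + 1$, to $(k-r+1)d$. Thus the right-hand side equals the same quantity $d_2 - d_1 + (k-r+1)d$, which proves the claim.

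There is no real obstacle here — the only place to be careful is in tracking signs when expanding the products $(r-1)(k+1)$ and $(r-1)(k-1) - (r-2)k$, since a single sign slip would break the identity. Once those are handled correctly, the proof is just a few lines of bookkeeping. This lemma is also conceptually transparent: the expression $d_2 - d_1 + (k-r+1)d$ is the common difference between horizontally adjacent entries along a given row, so the lemma is really saying that this common difference depends only on $k - r$, which is built directly into the bilinear form of $T_{r,k}$.
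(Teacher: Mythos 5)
Your proposal is correct and matches the paper's proof essentially verbatim: both expand $T_{r,k}-T_{r-1,k+1}$ and $T_{r-1,k-1}-T_{r-2,k}$ directly from the defining formula $T_{r,k}=c+kd_1+rd_2+rkd$ and observe that each side reduces to $d_2-d_1+(k-r+1)d$. Your closing remark that this common value depends only on $k-r$ is a nice conceptual gloss not in the paper, but the argument itself is the same.
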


\begin{Ex}
	In \figref{Fi:ConstColDiff} 
		$$82-76 = 50-44 = 26-20 = 10-4 = 6.$$
		\begin{figure}[H]
			\centering
			\includegraphics[scale=0.5]{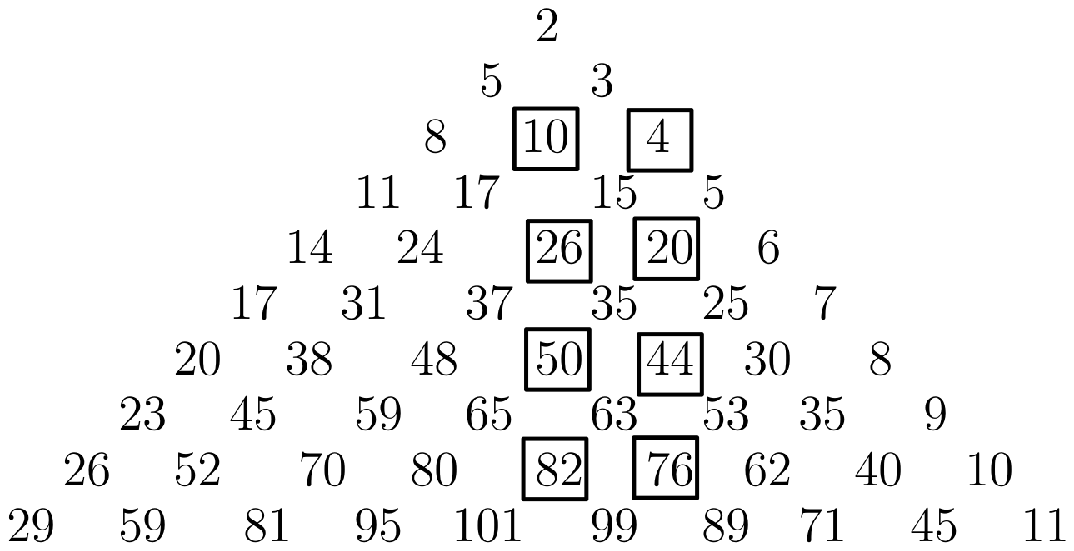}
			\caption{Constant Difference Along Columns}\label{Fi:ConstColDiff}
		\end{figure}	
\end{Ex}

\begin{proof}
	\begin{align*}
		T_{r,k} - T_{r-1,k+1} &= c + kd_1 + rd_2 +rkd\\ 
		&\phantom{=} - (c +(k+1)d_1 + (r-1)d_2 + (r-1)(k+1)d)\\
		&= d_2-d_1+(k-r+1)d\\
		T_{r-1,k-1} - T_{r-2,k} &= c +(k-1)d_1 + (r-1)d_2 + (r-1)(k-1)d\\ 
		&\phantom{=} -(c + kd_1 + (r-2)d_2 + (r-2)kd)\\
		&= d_2 -d_1 +(k-r+1)d
	\end{align*}
\end{proof}

The next property was named after Timothy and Meg who originally discovered the version for the Rascal Triangle, $T(1,1,0,0)$.   This property only applies to Generalized Rascal Triangles with $d_1=d_2 =0$.

\begin{Prop}[T-Meg Rule]\label{P:T-Meg} 
	Let $c,d\in\bbZ$ and $T=T(c,d,0,0)$ be the associated Generalized Rascal Triangle; then 
		$$T_{r,k}= T_{r-1,k-1} + T_{0,r+k-2} +T_{1,r+k-3}+ 2(d-c)$$ 
	for $r,\ge 1 k\ge 2$.
\end{Prop}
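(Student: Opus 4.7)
The plan is to proceed by direct substitution using the closed form for $T(c,d,0,0)$. Since $d_1=d_2=0$, the defining \Eqref{eq:GenRasEq} collapses to the simple formula $T_{r,k}=c+rkd$, so every term in the claimed identity becomes an explicit polynomial in $r,k,c,d$ and the verification is a one-line algebraic check.

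First I would record the four relevant evaluations of $T_{r,k}=c+rkd$: namely $T_{r-1,k-1}=c+(r-1)(k-1)d$, $T_{0,r+k-2}=c$ (since the first factor is $0$), and $T_{1,r+k-3}=c+(r+k-3)d$. Adding these three together with $2(d-c)$ gives
\begin{align*}
T_{r-1,k-1}+T_{0,r+k-2}+T_{1,r+k-3}+2(d-c) &= c + \bigl[(r-1)(k-1) + (r+k-3) + 2\bigr]d \\
&= c + \bigl[rk - r - k + 1 + r + k - 3 + 2\bigr]d \\
&= c + rkd = T_{r,k},
\end{align*}
which is exactly the desired identity. The only caveat is that the index $r+k-3$ must be nonnegative, which is why the hypothesis $r\ge 1$, $k\ge 2$ (with the implicit reading $r\ge 2$ when $k=2$, or more generally $r+k\ge 3$) is needed to ensure $T_{1,r+k-3}$ is actually an entry of the triangle.

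There is essentially no obstacle: once the simplification $d_1=d_2=0$ is applied, the claim reduces to verifying a linear identity in $d$ with constant coefficient $c$, and the coefficient of $d$ telescopes by elementary arithmetic. The only thing to be careful about is the bookkeeping of the constant $c$, which appears three times on the right (once from each $T$-term) and is balanced by the $-2c$ contributed by $2(d-c)$ to leave a single $c$.
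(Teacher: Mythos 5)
Your proposal is correct and is essentially identical to the paper's own proof: both specialize \Eqref{eq:GenRasEq} to $T_{r,k}=c+rkd$ and verify the identity by direct substitution and expansion, with the three $c$'s cancelling against the $-2c$ and the $d$-coefficient simplifying to $rk$. (One small remark: your worry about an ``implicit reading'' of the hypotheses is unnecessary, since $r\ge 1$ and $k\ge 2$ already force $r+k-3\ge 0$.)
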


\begin{Ex}
	In \figref{Fi:T-Meg} $c=3, d=1$, and
		$$12 = T_{r,k} = T_{r-1,k-1} + T_{0,r+k-2} +T_{1,r+k-3}+ 2(d-c) = 7 + 3 + 6 - 4$$
		\begin{figure}[H]
			\centering
			\includegraphics[scale=0.5]{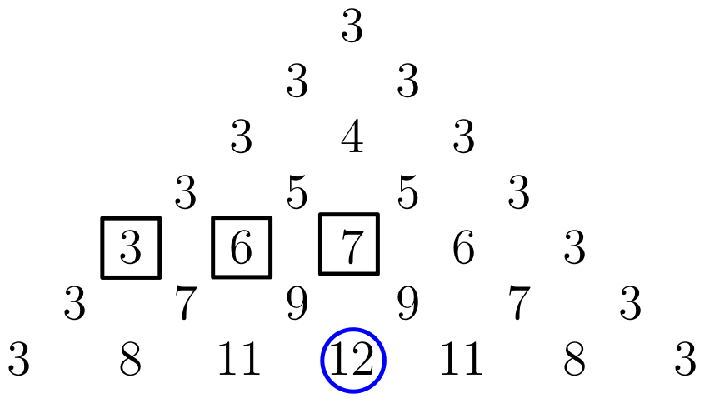}
			\caption{T-Meg Rule}\label{Fi:T-Meg}
		\end{figure}	
\end{Ex}

\begin{proof}[Proof of \propref{P:T-Meg}]
	Since $d_1=d_2=0$ we have $T_{r,k} = c+rkd$. Thus,  
	\begin{align*}
		T_{0,r+k-2} &+ T_{1,r+k-3}  +T_{r-1,k-1} + 2(d - c)\\
		 &= (c + 0(r+k-2)d) + (c+ 1(r+k-3)d) + (c+ (r-1)(k-1)d) + 2d - 2c\\
		&= c + c+rd+kd-3d + c+ rkd-rd-kd+d + 2d - 2c\\
		&= c+rkd = T_{r,k}
	\end{align*}
\end{proof}

\section{Conclusion}

The results in this paper grew out of explorations by Mathematics for Liberal Arts students looking for patterns in the Rascal Triangle.     My students enthusiasm and insights inspired me to look more deeply at the structure of the Rascal Triangle and the roles that \twoeqref{eq:RasForm1}{eq:RasForm2} played in that structure, which led to the Generalized Rascal Triangles.

\end{document}